\newtheorem{theorem}{Theorem}[section]
\newtheorem{lemma}[theorem]{Lemma}
\newenvironment{proof}[1][Proof]{\noindent\textbf{#1.} }{\ \rule{0.5em}{0.5em}}
\begin{document}

 \title[Global existence and blow-up for a two-dimensional nonlocal equation]{The
 Cauchy problem for a class of two-dimensional nonlocal nonlinear wave equations governing
 anti-plane shear motions in  elastic materials}

\author{H A Erbay$^{1}$, S Erbay$^{1}$ and A Erkip$^{2}$}

\address{$^1$ Department of Mathematics, Isik  University, Sile 34980, Istanbul, Turkey}
\ead{erbay@isikun.edu.tr}

\address{$^2$ Faculty of Engineering and Natural Sciences, Sabanci University,
        Tuzla 34956,  Istanbul,    Turkey}

  \begin{abstract}
 This paper is concerned with  the analysis of the Cauchy problem of  a general class of two-dimensional
 nonlinear nonlocal wave equations  governing anti-plane shear motions in  nonlocal elasticity. The
 nonlocal nature of the problem is reflected by a convolution integral in the space variables. The Fourier
 transform of the convolution kernel  is nonnegative and satisfies a certain growth condition at infinity.
 For initial data in $L^{2}$ Sobolev spaces, conditions for global existence or finite time blow-up
 of the solutions of the Cauchy problem are established.
 \end{abstract}

\ams{74H20, 74J30, 74B20}
\submitto{NL}
\maketitle

 \section{Introduction}

\noindent
In the present paper we consider the initial value problem
\begin{eqnarray}
    && w_{tt}=\left( \beta \ast {{\partial F}\over {\partial w_{x}}} \right)_x
            +\left( \beta \ast {{\partial F}\over {\partial w_{y}}} \right)_y,
            ~~~(x,y)\in {\Bbb R^2},~~t>0, \label{cau1} \\
    && w(x,y,0)=\varphi(x,y),~~~w_{t}(x,y,0)=\psi(x,y), \label{cau2}
\end{eqnarray}
where (\ref{cau1}) models anti-plane shear motions  in nonlinear nonlocal elasticity, in terms of non-dimensional
quantities.  In (\ref{cau1})-(\ref{cau2}), $w=w(x,y,t)$ represents the out-of-plane displacement,  the strain energy
density function $F$  is a nonlinear function of $| \nabla w|^{2} \equiv (w_x^2+w_y^2)$ for isotropic materials
with $F(0)=0$, and  the subscripts denote partial derivatives. The terms with $\beta$ in (\ref{cau1}) incorporate
the nonlocal effects where
\begin{equation*}
    (\beta \ast u)(x,y)=\int_{\mathbb{R}^2}\beta (x-x',y-y') u(x',y') dx' dy'
\end{equation*}
denotes convolution  of $\beta$ and $u$. The kernel $\beta(x,y)$ is  assumed to be an integrable function whose
Fourier transform, $\widehat{\beta}(\xi_1,\xi_2)$, satisfies
\begin{equation}
    0\le \widehat{\beta}(\xi)\leq C (1+| \xi |^{2})^{-r/2},~~\mbox{for all}~\xi=(\xi_1,\xi_2), \label{beta}
\end{equation}
where  $C$ is a positive constant and $r\geq 2$. The aim of this paper is to establish  the well-posedness of
the initial value problem  (\ref{cau1})-(\ref{cau2}), as well as the global existence and blow-up of
solutions for a wide class of  the kernel functions $\beta(x,y)$.  The number $r$ in (\ref{beta}) is closely related to
the smoothness of $\beta$ and, consequently, as the decay rate $r$ gets larger the regularizing  effect of the nonlocal behavior
increases. This situation is clearly observed through a comparison of Theorem \ref{theo3.7} and Theorem \ref{theo3.8}.

Although the model requires the nonlinearity to be of the isotropic form $F(w_x^2+w_y^2)$, our
results also apply to the more general $F(w_x, w_y)$-type nonlinearities corresponding to the anisotropic case.
The bulk of our work deals with the isotropic form of $F$ but in a separate section we present all the necessary modifications
corresponding to the anisotropic form. Similarly, in the model the kernel $\beta$ is a function of the
modulus $|(x,y)|$, but we do not require this restriction on $\beta$ in our work. Basically, the approach presented
here extends the techniques used for the one-dimensional nonlinear nonlocal Boussinesq-type wave equations in
the previous studies \cite{duruk1, duruk2, duruk3} to the two-dimensional wave equation given by (\ref{cau1}).
It is worthwhile observing that when $\beta$ is taken as  the Dirac measure in (\ref{cau1}), one recovers the
quasilinear wave equation for anti-plane shear motions of the local (classical) theory of elasticity. A natural question
is what happens if (\ref{cau1})-(\ref{cau2}) is considered on a bounded domain. This question  requires a careful
interpretation of the convolution integral and of possible boundary conditions. We refer to the recent manuscript
\cite{andreu}  where such interpretations  are studied for  nonlocal diffusion problems on bounded domains.

The plan of the paper is as follows:  In Section 2 we give a brief formulation of the anti-plane shearing problem of
nonlocal elasticity. In Section 3  we present a local existence theory  for solutions of the Cauchy problem
(\ref{cau1})-(\ref{cau2}) for given initial  data in  suitable Sobolev  spaces. In Section 4 we prove   global
existence of solutions of  (\ref{cau1})-(\ref{cau2}) assuming some  positivity condition on the nonlinear function
$F$ together with  enough smoothness on the initial data. In Section 5 we discuss  finite time blow-up of
solutions. Finally, in Section 6 we show that, with certain modifications, all the results of Sections 2-5 are also
valid for more general $F(w_x, w_y)$-type nonlinearities.

Throughout the paper,
$\widehat{u}(\xi)=\mathcal{F}(u) (\xi)=  \int_{\Bbb R^2} e^{-i z \cdot \xi} u(z) dz$ and
$\mathcal{F}^{-1}(\widehat{u}) (z)=\frac{1}{(2\pi)^2}\int_{\Bbb R^2} e^{i z \cdot \xi}\widehat{u}(\xi) d \xi$
denote the Fourier transform and inverse Fourier transform, respectively, where
$z=(x,y),~\xi=(\xi_1,\xi_2), ~dz=dx dy$, and $d\xi=d\xi_1 d\xi_2$. Furthermore, $H^{s}({\Bbb R^2})$ denotes the
$L^{2}$ Sobolev space on ${\Bbb R^2}$. For the $H^{s}$ norm we use the  Fourier transform representation
 $~\left\Vert u \right\Vert_{s}^{2}=\int_{\Bbb R^2} (1+| \xi|^{2})^{s}|\widehat{u}(\xi)|^{2}
 \mbox{d}\xi~$. Also, $~\left\Vert u \right\Vert_{L^{\infty}}~$ and $~\left\Vert u \right\Vert~$
 indicate the $L^{\infty}$ and $L^{2}$ norms, respectively, and $\langle u, v \rangle$ refers to the inner product of
 $u$ and $v$ in $L^2(\Bbb R^2)$.

\section{The Model}

In this section we discuss  how equation (\ref{cau1}) can be derived to describe the propagation of a
finite amplitude transverse wave  in a nonlocally elastic medium.  Before stating our derivation, we need to introduce
the concept of nonlocal elasticity. One of the major drawbacks in the local theory
of elasticity is that it does not include any intrinsic length scale and consequently does not take into account
the long range forces that become increasingly important at small scales. As a result,  the local theory of
elasticity is incapable of predicting, for instance,  (i) the dispersive nature of harmonic waves in crystal lattices
and (ii) the boundedness of the stress field near the tip of a crack. In order to overcome such deficiencies various
generalizations of the local theory of elasticity have been proposed. One such generalization is the theory of
nonlocal elasticity which has been developed by Kr\"oner \cite{kroner},  Eringen and Edelen \cite{eringen1}, Kunin
\cite{kunin}, Rogula \cite{rogula}, Eringen \cite{eringen2, eringen3}  over the last several decades (For more recent
studies on the subject of generalized theories of elasticity, see, for instance,
\cite{silling, polizzotto, eskandarian, arndt, blanc, huang} and references therein). What distinguishes the theory of
nonlocal elasticity from the local theory of elasticity is that the stress at a point depends on the strain field
at every point in the body. Although there has been a considerable amount of research done on small scale effects within
the context of the theory of nonlocal elasticity,  they are mostly restricted to linear models. Recently, in
\cite{duruk1, duruk2, duruk3} various Cauchy problems based on a one-dimensional nonlinear model of nonlocal elasticity
have been studied. Here, we show how the approach in those studies is extended to the dynamic anti-plane shearing
problem of nonlinear nonlocal elasticity.

Consider an isotropic homogeneous nonlocally elastic medium. Identify a material point $\bf X$ of the medium by its
rectangular Cartesian coordinates in a  reference configuration: ${\bf X}=(X_{1},X_{2},X_{3})$.
We assume that the reference configuration is unstressed. Let
${\bf x}({\bf X},t)=(x_{1}({\bf X},t),x_{2}({\bf X},t),x_{3}({\bf X},t))$ denote the position of the same point at
time $t$. Then the displacement and the deformation gradient are given by ${\bf u}({\bf X},t)={\bf x}({\bf X},t)-{\bf X}$
and ${\bf A}({\bf X},t)=\mbox{Grad}~{\bf x}({\bf X},t)$, respectively. We suppose that a (local) strain energy density
function $F({\bf A})$ per unit volume of the undeformed reference configuration exists, i.e., the material is (locally)
hyperelastic, and that it sustains a nontrivial anti-plane shear motion.   In the local theory of elasticity, a
constitutive equation of the form
$\boldsymbol{\sigma}=\boldsymbol{\sigma}({\bf A})\equiv \partial F({\bf A})/\partial {\bf A}$ holds for a hyperelastic
material (see equation (4.3.7) of \cite{ogden}), where $\boldsymbol{\sigma}$ is the nominal stress tensor (note that some
authors use its transpose referred to as the first Piola-Kirchhoff stress tensor). In the theory of nonlocal  elasticity
the (nonlocal) stress tensor ${\bf S}$ is related to the (local) stress tensor $\boldsymbol{\sigma}$ through the
constitutive relation
${\bf S}={\bf S}({\bf X},t)\equiv \int \beta(|{\bf X}-{\bf Y}|){\boldsymbol{\sigma}}({\bf A}({\bf Y},t))d{\bf Y}$ where
$\beta(|{\bf X}-{\bf Y}|)$ is a kernel function that weights the contribution of the local stresses to the nonlocal
stresses.  In the absence of body forces, the (Lagrangian) equation of motion (see equation (3.4.4) of \cite{ogden}) is
given by $\rho_{0}\ddot{\bf x}=\mbox{Div}~{\bf S}$ where $\rho_{0}$ is the mass density of the medium and a superposed
dot indicates the material time derivative. The only difference between the equations of the local theory of
elasticity and those of the nonlocal model presented here is due to the constitutive equations.

Now we consider an anti-plane shear motion of the form
\begin{equation}
    x_{1}=X_{1},~~~~x_{2}=X_{2},~~~~x_{3}=X_{3}+w(X_{1},X_{2},t) \label{anti}
\end{equation}
for a nonlocally elastic material, where the out-of-plane displacement $w$ is the only non-zero component of
displacement, i.e. $u_{1}=u_{2}\equiv 0$ and $u_{3}\equiv w(X_{1},X_{2},t)$. We henceforth replace the arguments
$X_{1}$ and $X_{2}$ of the displacement $w$ with $x$ and $y$, respectively, and denote partial differentiations with
subscript letters. For isotropic materials the strain energy density  function $F$ is a function of the three
fundamental scalar invariants of the left Cauchy-Green matrix  and for the anti-plane shear motion (\ref{anti})
it turns out to be a function of $w_x^2+w_y^2$ alone: $F=F(w_x^2+w_y^2)$ (see, for instance, Section 4 of \cite{horgan}).
Furthermore,  the equation of motion reduces to the scalar partial differential equation
$\rho_{0}w_{tt}=\left( \beta \ast \sigma_{13} \right)_x+\left( \beta \ast \sigma_{23} \right)_y$ where
$\sigma_{13}$ and $\sigma_{23}$ are the (local) shear stresses arising due to the anti-plane shear motion and they
are given by $\sigma_{13}={\partial F}/{\partial w_{x}}$ and $\sigma_{23}={\partial F}/{\partial w_{y}}$.  The
computations are identical to those in the conventional formulation of nonlinear elasticity, provided we replace the
nonlocal stress tensor with the local stress of conventional theory of elasticity \cite{horgan}. The nonlocal behavior
is represented by the convolution integral. Thus, without loss of generality, if we make a suitable
non-dimensionalization of the equation of motion (see \cite{duruk2} for the non-dimensionalization in the one-dimensional
case) and use the same symbols to avoid a proliferation of notation, or simply take the mass density to be 1, we get
(\ref{cau1}). Equation (\ref{cau1}) is consistent with that of the conventional formulation of nonlinear elasticity. In
other words, when $\beta$ is taken as  the Dirac measure to eliminate the nonlocal effect, (\ref{cau1}) reduces  to the
quasilinear wave equation governing anti-plane shear  motions in the local theory of nonlinear elasticity (see
for instance equation (7.10) of \cite{horgan} or equation (2.2) of \cite{lott}). A list of the most commonly used
one-dimensional kernel functions that satisfy the one-dimensional version of the condition given in (\ref{beta}) is
presented in \cite{duruk2}. We now present three examples of two-dimensional kernel functions used in  the literature.
\begin{itemize}
\item[{\it (i)}] {\it The Gaussian kernel} \cite{eringen4}: $\beta(x,y)=(2\pi)^{-1}e^{-(x^{2}+y^{2})/2}$. We have
        $\widehat{\beta}(\xi_{1},\xi_{2})=e^{-(\xi_{1}^{2}+\xi_{2}^{2})/2}$. This is a highly regularizing
        kernel as can be observed by the fact that we can take any $r$  in (\ref{beta}).
\item[{\it (ii)}] {\it The modified Bessel function kernel} \cite{eringen4}:
        $\beta(x,y)=(2\pi)^{-1}K_{0}(\sqrt{x^{2}+y^{2}})$ where $K_{0}$ is the modified Bessel function of the second
        kind of order zero. Since $\widehat{\beta}(\xi_{1},\xi_{2})=(1+\xi_{1}^{2}+\xi_{2}^{2})^{-1}$, for this special
        case we have $r=2$ in (\ref{beta}). Note that $\beta$ is the Green's function for the operator
        $(1-\Delta)$ where $\Delta$ denotes the two-dimensional Laplacian. In this case (\ref{cau1}) can equivalently be
        written as
        \begin{equation*}
        w_{tt}-\Delta w_{tt}=\left( {{\partial F}\over {\partial w_{x}}} \right)_x
            +\left( {{\partial F}\over {\partial w_{y}}} \right)_y.
        \end{equation*}
        Letting $F(s)={1\over 2}s+G(s)$ we obtain the more familiar form
        \begin{equation*}
        w_{tt}-\Delta w -\Delta w_{tt}=\left( {{\partial G}\over {\partial w_{x}}} \right)_x
            +\left( {{\partial G}\over {\partial w_{y}}} \right)_y.
        \end{equation*}
\item[{\it (iii)}] {\it The bi-Helmholtz type kernel} \cite{maugin}:
        \begin{equation*}
          \beta(x,y)=\frac{1}{2 \pi(c_1^2-c_2^2)}   [ K_0 (\sqrt{x^2+y^2}/c_1) -
                                                      K_0 (\sqrt{x^2+y^2}/c_2) ]
        \end{equation*}
        where $c_1$ and $c_2$ are real and positive constants.     Since
        $\widehat{\beta}(\xi_{1},\xi_{2})= [1+\gamma_1 (\xi_1^2+\xi_2^2)+\gamma_2(\xi_1^2+\xi_2^2)^2  ]^{-1}$
        where $\gamma_1=c_1^2+c_2^2$ and $\gamma_2=c_1^2 c_2^2$  we have $r=4$. As above, $\beta$ is  Green's function
        for the operator $(1-\gamma_1\Delta +\gamma_2 \Delta^2)$. Then  (\ref{cau1}) becomes
        \begin{equation*}
        w_{tt}-\Delta w -\gamma_1 \Delta w_{tt}+\gamma_2 \Delta^2 w_{tt}
            =\left( {{\partial G}\over {\partial w_{x}}} \right)_x
            +\left( {{\partial G}\over {\partial w_{y}}} \right)_y.
        \end{equation*}
\end{itemize}
 In the remainder of this paper we discuss the question of well-posedness of the Cauchy problem (\ref{cau1})-(\ref{cau2}).

 \section{Local Existence and Uniqueness of Solutions}

 In the present section, we prove existence and uniqueness of solutions over a small time interval.
 Local well-posedness is established by converting the initial value problem (\ref{cau1})-(\ref{cau2})
 into a system of Banach space-valued ordinary differential equations.
Thus  (\ref{cau1})-(\ref{cau2}) is formally equivalent to the system
\begin{eqnarray}
    &&  w_t=v, ~~~w(0)=\varphi,    \label{system1} \\
    &&  v_t= Kw, ~~~ v(0)=\psi \label{system2}
    \end{eqnarray}
    where the operator $K$ is defined as
     \begin{equation}
    Kw=\left( \beta \ast {{\partial F}\over {\partial w_{x}}} \right)_x
            +\left( \beta \ast {{\partial F}\over {\partial w_{y}}} \right)_y. \label{k}
     \end{equation}
 The Banach space $X^s$  will be defined as
 \begin{equation*}
    X^s= \{ w\in H^s(\mathbb{R}^2);~w_x, w_y\in L^\infty(\mathbb{R}^2) \},
 \end{equation*}
 endowed with the norm
 \begin{equation}
 \Vert w\Vert_{X^{s}}=  \Vert w\Vert_s + \Vert w_x\Vert_{L^{\infty}} + \Vert w_y\Vert_{L^{\infty}}. \label{norm}
 \end{equation}
The following two lemmas  are useful in the proof.
\begin{lemma}\label{lem3.1} (Sobolev Embedding Theorem)
    If $ ~\displaystyle s>\frac{n}{2}+k$, then \\
    $H^{s}(\mathbb{R}^n) \subset C^{k}(\mathbb{R}^n) \cap L^{\infty} (\mathbb{R}^n)$.
\end{lemma}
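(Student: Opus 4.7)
The plan is to prove this classical embedding via the Fourier representation of the $H^{s}$ norm, with the condition $s > \frac{n}{2}+k$ entering through an integrability estimate on the weight $(1+|\xi|^{2})^{-s/2}$. The key quantitative input is that $(1+|\xi|^{2})^{-s/2} \in L^{2}(\mathbb{R}^{n})$ precisely when $2s > n$, so that pairing this weight with the Sobolev norm via Cauchy--Schwarz produces an $L^{1}$ bound on $\widehat{u}$.

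First I would treat the case $k=0$. Given $u \in H^{s}(\mathbb{R}^{n})$ with $s > n/2$, I would split
\begin{equation*}
\int_{\mathbb{R}^{n}} |\widehat{u}(\xi)|\, d\xi = \int_{\mathbb{R}^{n}} (1+|\xi|^{2})^{-s/2} \cdot (1+|\xi|^{2})^{s/2}\, |\widehat{u}(\xi)|\, d\xi
\end{equation*}
and apply Cauchy--Schwarz; since $\int_{\mathbb{R}^{n}} (1+|\xi|^{2})^{-s}\, d\xi < \infty$ when $2s > n$, this gives $\widehat{u} \in L^{1}$ with $\|\widehat{u}\|_{L^{1}} \leq C\, \|u\|_{s}$. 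Fourier inversion $u(x)=(2\pi)^{-n}\int_{\mathbb{R}^{n}} e^{ix\cdot\xi}\widehat{u}(\xi)\, d\xi$ then yields $\|u\|_{L^{\infty}} \leq C\,\|u\|_{s}$, and continuity of the representative $u$ follows from dominated convergence applied to this integral (equivalently, $\mathcal{F}^{-1}$ maps $L^{1}$ into $C_{0}$).

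Next I would bootstrap to $C^{k}$ by applying the $k=0$ case to each derivative. For every multi-index $\alpha$ with $|\alpha|\leq k$, the distributional derivative $\partial^{\alpha}u$ has Fourier transform $(i\xi)^{\alpha}\widehat{u}(\xi)$, and since $|\xi^{\alpha}|^{2} \leq (1+|\xi|^{2})^{|\alpha|}$ one obtains $\partial^{\alpha}u \in H^{s-|\alpha|}(\mathbb{R}^{n})$ with norm controlled by $\|u\|_{s}$. Because $s-|\alpha| \geq s-k > n/2$, the previous step shows $\partial^{\alpha}u \in L^{\infty}(\mathbb{R}^{n}) \cap C^{0}(\mathbb{R}^{n})$ for all such $\alpha$, and hence $u \in C^{k}(\mathbb{R}^{n})$.

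This is a textbook result and presents no substantial obstacle; the only point deserving care is the passage from the a.e.\ pointwise bound to genuine continuity, but this is handled automatically once $\widehat{u} \in L^{1}$ by dominated convergence in the inversion formula.
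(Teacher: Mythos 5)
Your proof is correct and complete. Note, however, that the paper does not prove Lemma \ref{lem3.1} at all: it is quoted as a classical result with a reference to Chapter 5 of Adams' \emph{Sobolev Spaces}, so there is no in-paper argument to compare against. The Fourier-analytic route you take (Cauchy--Schwarz against the weight $(1+|\xi|^{2})^{-s/2}$, which is square-integrable exactly when $2s>n$, giving $\widehat{u}\in L^{1}$ and hence a bounded continuous representative via the inversion formula, then bootstrapping to derivatives of order at most $k$) is the standard and most economical proof for the $L^{2}$-based spaces $H^{s}$ used throughout this paper; Adams' treatment is more general (covering $W^{k,p}$ for all $p$) and correspondingly heavier. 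The only point worth making explicit in a fully written-out version is that once $\xi^{\alpha}\widehat{u}\in L^{1}$ for all $|\alpha|\leq k$, differentiation under the integral sign in the inversion formula (justified by dominated convergence) identifies the continuous representatives of the distributional derivatives with genuine classical derivatives, which is what licenses the conclusion $u\in C^{k}$; you gesture at this and it is indeed routine.
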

In particular when $n=2$ and $s>2$, the embedding in Lemma \ref{lem3.1}  implies the norm estimate
    $\Vert ~|\nabla u|~ \Vert_{L^{\infty}} \leq  C\Vert u \Vert_s$. We refer to Chapter 5 of \cite{adams} for a discussion on the many versions of the Sobolev
    embedding theorem.
\begin{lemma}\label{lem3.2}
    Let $s\geq 0$  and let $u_{1},u_{2}\in H^{s}(\mathbb{R}^n)\cap L^{\infty}(\mathbb{R}^n)$. Then $u_{1}u_{2}\in H^{s}(\mathbb{R}^n)$ and
    \begin{equation*}
    \Vert u_{1}u_{2} \Vert_s \leq C (  \Vert u_{1} \Vert_s \Vert u_{2} \Vert_{L^{\infty}}
              +\Vert u_{1} \Vert_{L^{\infty}} \Vert u_{2} \Vert_s ).
    \end{equation*}
\end{lemma}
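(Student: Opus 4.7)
The plan is to work in the Fourier picture, since the $H^s$ norm in the paper is defined via the Fourier transform. By Plancherel and the product-to-convolution formula $\widehat{u_1u_2}=(2\pi)^{-n}(\hat u_1*\hat u_2)$, the problem reduces to bounding the $L^2$ norm of $(1+|\xi|^2)^{s/2}(\hat u_1*\hat u_2)(\xi)$. The first tool is Peetre's inequality, valid for $s\geq 0$,
\[
(1+|\xi|^2)^{s/2}\leq C_s\bigl[(1+|\xi-\eta|^2)^{s/2}+(1+|\eta|^2)^{s/2}\bigr],
\]
which distributes the weight across the convolution and splits the bound into two symmetric pieces.

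The naive attempt to finish with Young's inequality breaks down at this point, because it would require $\hat u_i\in L^1$ rather than $u_i\in L^\infty$. To bridge this gap I would introduce a dyadic Littlewood-Paley decomposition $u_i=\sum_j\Delta_j u_i$ and the paraproduct splitting $u_1u_2=T_{u_1}u_2+T_{u_2}u_1+R(u_1,u_2)$, where $T_{u_1}u_2=\sum_j S_{j-3}u_1\cdot\Delta_j u_2$ is the low-high paraproduct, $T_{u_2}u_1$ is its symmetric counterpart, and $R$ is the high-high resonant remainder. Each block of $T_{u_1}u_2$ is frequency-localized in an annulus of scale $2^j$, so Bernstein's inequality gives $\|S_{j-3}u_1\cdot\Delta_j u_2\|_{L^2}\leq C\|u_1\|_{L^\infty}\|\Delta_j u_2\|_{L^2}$; summing $2^{2js}$-weighted squares and invoking $\|u\|_s^2\sim\sum_j 2^{2js}\|\Delta_j u\|_{L^2}^2$ produces $\|T_{u_1}u_2\|_s\leq C\|u_1\|_{L^\infty}\|u_2\|_s$, and $T_{u_2}u_1$ is handled identically.

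The main obstacle is the resonant piece $R$: a product of two blocks at scale $2^j$ is no longer localized there, so pieces at different $j$ interact and must be reassembled. For each frequency block one estimates $\|\Delta_k R\|_{L^2}\leq\sum_{j\geq k-N}\|\Delta_j u_1\|_{L^\infty}\|\tilde\Delta_j u_2\|_{L^2}\leq\|u_1\|_{L^\infty}\sum_{j\geq k-N}\|\tilde\Delta_j u_2\|_{L^2}$, and Young's convolution inequality on $\ell^2$ sequences converts the nested sum into $\|u_1\|_{L^\infty}\|u_2\|_s$. Adding all three contributions and their symmetric counterparts yields the stated bound, with the membership $u_1u_2\in H^s$ following as a by-product of the finiteness of the right-hand side.

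A more elementary alternative, available when $s$ is a nonnegative integer, is to bypass Littlewood-Paley completely: apply the Leibniz rule to $D^\alpha(u_1u_2)$ for $|\alpha|\leq s$, estimate each term by H\"older's inequality, and invoke the Gagliardo-Nirenberg interpolation $\|D^\beta u\|_{L^{2s/|\beta|}}\leq C\|u\|_{L^\infty}^{1-|\beta|/s}\|u\|_s^{|\beta|/s}$ to recover the mixed norm. Complex interpolation between adjacent integer levels then extends the estimate to all real $s\geq 0$.
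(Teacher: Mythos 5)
The paper does not prove this lemma at all: it is quoted as a known product (Moser/Kato--Ponce) estimate, with a pointer to Lemma X4 of Kato and Ponce and to Runst--Sickel and Taylor for the general theory. Your paraproduct argument is the standard modern proof of exactly this estimate, and its core is sound: the Bony splitting $u_1u_2=T_{u_1}u_2+T_{u_2}u_1+R(u_1,u_2)$, the bound $\Vert S_{j-3}u_1\cdot\Delta_ju_2\Vert_{L^2}\leq C\Vert u_1\Vert_{L^\infty}\Vert\Delta_ju_2\Vert_{L^2}$ together with almost-orthogonality of the output frequencies, and the $\ell^1\ast\ell^2$ summation for the resonant piece give precisely the asserted two-term bound. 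So what you have done differently is simply to supply a self-contained proof where the paper delegates to the literature; what the citation buys the authors is the sharper $L^p$-version and brevity, while your argument makes the $L^2$ case transparent. Three small points to tighten. First, the low-high estimate is not Bernstein's inequality; it is H\"older's inequality plus the uniform bound $\Vert S_ju_1\Vert_{L^\infty}\leq C\Vert u_1\Vert_{L^\infty}$ (convolution with an $L^1$-normalized kernel). Second, the Young-on-sequences step for $R$ uses that $\sum_{k-j\leq N}2^{(k-j)s}<\infty$, which requires $s>0$; the endpoint $s=0$ of the lemma is then just H\"older's inequality, so nothing is lost, but you should say so. Third, the "elementary alternative" is the weakest link: the Leibniz--Gagliardo--Nirenberg argument does give the integer case, but the two-term Moser bound is not of a form that complex interpolation between integer levels transfers automatically (multiplication by $u_1$ is not a bounded operator on $H^m$ with norm controlled by $\Vert u_1\Vert_{L^\infty}+\Vert u_1\Vert_m$ when $m\leq n/2$), so that extension needs more care or should be dropped in favour of the paraproduct proof, which already covers all real $s\geq 0$.
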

Lemma \ref{lem3.2} can be found in \cite{kato} in a more general $L^{p}$-setting (see Lemma X4 of \cite{kato}); we also refer to \cite{taylor} for a general discussion.
The following two lemmas (see Chapter 5 of \cite{runst}) have  been used by many authors (for instance, see Lemmas 1, 2 and 3 of \cite{constantin} or
Lemmas 2.3 and 2.4 of \cite{wang2}) to
control the nonlinear terms.
\begin{lemma}\label{lem3.3}
    Let $s\geq 0,$ $f\in C^{[s]+1}(\mathbb{R})$ with $f(0)=0$. Then for any $u\in H^{s}(\mathbb{R}^n)\cap L^{\infty}(\mathbb{R}^n)$, we have
    $f(u) \in H^{s}(\mathbb{R}^n)\cap L^{\infty}(\mathbb{R}^n)$. Moreover there is some constant $A(M)$ depending on
    $M$ (and $s$) such
    that for all $u\in H^{s}(\mathbb{R}^n)\cap L^{\infty}(\mathbb{R}^n)$ with $\Vert u\Vert_{L^{\infty}}\leq M$
        \begin{equation*}
        \Vert f(u)\Vert_s \leq A(M) \Vert u \Vert_s~.
        \end{equation*}
\end{lemma}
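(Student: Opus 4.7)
The plan is to prove both conclusions using the elementary decomposition $f(u)=u\cdot g(u)$, where $g(t)=\int_{0}^{1}f'(\tau t)\,d\tau$ is well-defined by $f(0)=0$. For $|t|\le M$ one has $|g(t)|\le\sup_{|\tau|\le M}|f'(\tau)|=:C_{0}(M)$, which immediately yields the $L^{\infty}$ bound $\|f(u)\|_{L^{\infty}}\le C_{0}(M)\|u\|_{L^{\infty}}$ and, in the same stroke, the $s=0$ case of the $H^{s}$ estimate: $\|f(u)\|_{0}\le C_{0}(M)\|u\|_{0}$.

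For positive integer $s=k\ge 1$, I would bound $\|\partial^{\alpha}f(u)\|_{L^{2}}$ for each $|\alpha|\le k$ by expanding $\partial^{\alpha}f(u)$ via the Fa\`a di Bruno chain rule into a finite sum of terms of the form $f^{(j)}(u)\prod_{i=1}^{j}\partial^{\beta_{i}}u$, with $1\le j\le|\alpha|$, $|\beta_{i}|\ge 1$ and $\sum_{i}|\beta_{i}|=|\alpha|$. The factor $f^{(j)}(u)$ is controlled in $L^{\infty}$ by $\sup_{|\tau|\le M}|f^{(j)}(\tau)|$, which is finite since $f\in C^{k+1}$. For the product of derivatives I would apply H\"older together with the Gagliardo--Nirenberg inequality
\begin{equation*}
\|\partial^{\beta_{i}}u\|_{L^{p_{i}}}\le C\|u\|_{s}^{|\beta_{i}|/s}\|u\|_{L^{\infty}}^{1-|\beta_{i}|/s},\qquad p_{i}=\frac{2s}{|\beta_{i}|},
\end{equation*}
so that $\sum 1/p_{i}=|\alpha|/(2s)\le 1/2$ and H\"older closes, giving $\|\partial^{\alpha}f(u)\|_{L^{2}}\le A(M)\|u\|_{s}$. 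Summing over $|\alpha|\le k$ delivers the claim for integer $s$.

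For non-integer $s$ the argument is completed by complex interpolation of the estimates at the integer values $k=[s]$ and $k=[s]+1$, using that the underlying interpolation scale for $H^{s}(\mathbb{R}^{n})$ is the standard one and that the map $u\mapsto f(u)$ is nonlinear but its Moser-type bound is preserved by the interpolation functor on each ball $\{\|u\|_{L^{\infty}}\le M\}$. The main obstacle is exactly this non-integer case, where the Fa\`a di Bruno expansion is unavailable in a literal sense; one must either interpolate as above or replace the chain-rule step by a Littlewood--Paley / Bony paraproduct decomposition that handles every $s\ge 0$ uniformly. A secondary, but essential, point is the hypothesis $f(0)=0$, which both guarantees $f(u)\in L^{2}$ and supplies the factorization $f(u)=u\,g(u)$ that gets the whole scheme off the ground.
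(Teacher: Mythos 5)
The paper does not actually prove this lemma; it is quoted verbatim from the literature (Chapter 5 of Runst--Sickel, as used also by Constantin--Molinet and Wang--Chen), so there is no in-paper argument to compare against. Judged on its own terms, your proposal is sound for integer $s$: the factorization $f(u)=u\,g(u)$ handles $s=0$ and the $L^{\infty}$ bound, and the Fa\`a di Bruno plus Gagliardo--Nirenberg scheme is the standard Moser argument. One technical slip there: with $p_{i}=2s/|\beta_{i}|$ you get $\sum_{i}1/p_{i}=|\alpha|/(2s)$, which is \emph{strictly less} than $1/2$ whenever $|\alpha|<s$, so H\"older lands you in $L^{q}$ with $q>2$, and $L^{q}\not\subset L^{2}$ on $\mathbb{R}^{n}$. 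You should instead take $p_{i}=2|\alpha|/|\beta_{i}|$ and interpolate between $L^{\infty}$ and $\dot W^{|\alpha|,2}$, then use $\Vert u\Vert_{W^{|\alpha|,2}}\le\Vert u\Vert_{s}$; this is easily repaired.

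The genuine gap is the non-integer case. Complex interpolation is a statement about \emph{linear} (or analytic families of) operators; the composition map $u\mapsto f(u)$ is nonlinear, and having the Moser bound at the two integer levels $[s]$ and $[s]+1$ does not, by any interpolation functor applied naively, yield the bound at intermediate $s$ --- your phrase ``its Moser-type bound is preserved by the interpolation functor on each ball'' is precisely the assertion that needs proof and is not a theorem. There are nonlinear interpolation results (Tartar, Peetre) that could be invoked, but they require a \emph{Lipschitz} estimate at the lower endpoint (essentially Lemma 3.4 of the paper, which is a separate and harder statement) together with boundedness at the upper one, and you would have to verify that the real-interpolation scale $(H^{[s]},H^{[s]+1})_{\theta,2}=H^{s}$ is the one produced. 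You correctly identify the Littlewood--Paley/paraproduct route as the fix, but you do not carry it out, and as written the fractional case is not established. Since the lemma is stated for all real $s\ge 0$ and the paper applies it at non-integer regularity (e.g.\ $s-1$ with $s>2$ arbitrary), this case cannot be waved away; the honest options are to execute the paradifferential argument or to cite Runst--Sickel as the authors do.
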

\begin{lemma}\label{lem3.4}
    Let $s\geq 0$, $f\in C^{[s]+1}(\Bbb R)$. Then for any $M>0$ there is some constant $B(M)$ depending on
    $M$ (and $s$) such that for
    all $~u_{1},u_{2}\in H^{s}({\Bbb R}^{n})\cap L^{\infty }({\Bbb R}^{n})$ with
    $~\Vert u_{1} \Vert_{L^{\infty}} \leq M$, $~\Vert u_{2}\Vert_{L^{\infty}} \leq M~$ and
    $~\Vert u_{1} \Vert_s \leq M$, $~\Vert u_{2}\Vert_s\leq M~$ we have
    \begin{equation*}
    \Vert f(u_{1})-f(u_{2})\Vert_s      \leq B(M) \Vert u_{1}-u_{2} \Vert_s~.
    \end{equation*}
\end{lemma}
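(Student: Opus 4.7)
The plan is to reduce the Lipschitz estimate to the boundedness estimate of Lemma~\ref{lem3.3} by expressing $f(u_1) - f(u_2)$ as a product via the fundamental theorem of calculus, and then invoking the Moser-type product estimate of Lemma~\ref{lem3.2}. I would first set $u_\tau = u_2 + \tau(u_1 - u_2)$ for $\tau \in [0,1]$, so that each $u_\tau$ inherits the bounds $\Vert u_\tau \Vert_s \leq M$ and $\Vert u_\tau \Vert_{L^\infty} \leq M$ by convexity, and write
\[
  f(u_1) - f(u_2) = (u_1 - u_2)\,\Phi, \qquad \Phi = \int_0^1 f'(u_\tau)\, d\tau .
\]

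The next step would be to control $\Phi$ in the two norms used by the product lemma. Continuity of $f'$ on the compact interval $[-M,M]$ immediately gives $\Vert \Phi \Vert_{L^\infty} \leq K_1(M) := \sup_{|y|\leq M}|f'(y)|$. For the $H^s$ bound, since Lemma~\ref{lem3.3} requires the nonlinearity to vanish at the origin, I would split $\Phi = f'(0) + \widetilde{\Phi}$ with $\widetilde{\Phi} = \int_0^1 g(u_\tau)\,d\tau$ and $g(y) = f'(y) - f'(0)$, noting $g(0)=0$. Lemma~\ref{lem3.3} applied pointwise in $\tau$, followed by Minkowski's integral inequality, yields $\Vert \widetilde{\Phi} \Vert_s \leq K_2(M)$.

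Finally I would apply Lemma~\ref{lem3.2} to the product $(u_1 - u_2)\Phi$: the constant piece contributes cleanly as $|f'(0)|\,\Vert u_1 - u_2 \Vert_s$, while the remaining factor gives
\[
  \Vert (u_1 - u_2)\widetilde{\Phi}\Vert_s \leq C\bigl( K_1(M)\,\Vert u_1 - u_2 \Vert_s + K_2(M)\,\Vert u_1 - u_2 \Vert_{L^\infty}\bigr).
\]
Collecting the pieces produces the desired constant $B(M)$.

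The main subtlety I expect is the second term in the product estimate above, since the naive bound $\Vert u_1 - u_2\Vert_{L^\infty} \leq 2M$ is not proportional to $\Vert u_1 - u_2 \Vert_s$ and so does not by itself give a Lipschitz dependence. In every use of this lemma in the paper we will have $s > n/2 = 1$ (so that $X^s \subset L^\infty$ by Lemma~\ref{lem3.1}), and the Sobolev embedding $\Vert u_1 - u_2\Vert_{L^\infty} \leq C\Vert u_1 - u_2\Vert_s$ absorbs the obstructing term. The general statement for arbitrary $s\geq 0$ rests on the finer paradifferential techniques in the cited reference \cite{runst}, which is why we invoke the lemma directly from that source rather than re-deriving it here.
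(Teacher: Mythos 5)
The paper does not actually prove this lemma: it is quoted from Chapter 5 of \cite{runst} (the sentence preceding Lemmas \ref{lem3.3} and \ref{lem3.4} cites that monograph), so there is no in-paper argument to compare yours against. Your sketch is the standard route --- the fundamental-theorem-of-calculus factorization $f(u_1)-f(u_2)=(u_1-u_2)\Phi$ combined with the product estimate of Lemma \ref{lem3.2} and the composition estimate of Lemma \ref{lem3.3} --- and it is essentially sound in the high-regularity regime, but it has one gap you did not flag: to bound $\Vert\widetilde{\Phi}\Vert_s$ you apply Lemma \ref{lem3.3} to $g=f'-f'(0)$, which requires $g\in C^{[s]+1}$, i.e.\ $f\in C^{[s]+2}$, whereas the hypothesis only provides $f\in C^{[s]+1}$. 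For integer $s$ one can often recover this derivative by reproving the composition estimate directly, but for fractional $s$ the loss is real; avoiding it is one of the reasons the sharp statement is established in \cite{runst} by finer methods rather than by this factorization.

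Your closing claim that every application in the paper has $s>n/2=1$, so that Sobolev embedding absorbs the term $\Vert u_1-u_2\Vert_{L^\infty}\Vert\widetilde{\Phi}\Vert_s$, is also not accurate. Lemma \ref{lem3.6} applies the present lemma to the components of $\nabla w_i$ measured in $H^{s-1}$, and under the hypotheses of Theorem \ref{theo3.8} the relevant index $s-1$ can lie anywhere in $[0,\infty)$; for $1\le s\le 2$ the embedding $H^{s-1}\subset L^\infty$ fails, and the $X^s$ norm controls $\Vert\nabla w_i\Vert_{L^\infty}$ but does not bound $\Vert\nabla(w_1-w_2)\Vert_{L^\infty}$ by $\Vert w_1-w_2\Vert_s$. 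So the low-regularity case you defer to \cite{runst} is genuinely needed by the paper, not merely a formal generality. Since the paper itself simply cites \cite{runst}, your deferral leaves you no worse off than the authors, but the two caveats above should replace the assertion that the elementary argument covers all the uses made of the lemma.
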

In our case the nonlinearities are of the form
\begin{equation*}
\!\!\!\!\!\!\!\!\!\!\!\!\!\!\!
{{\partial F}\over {\partial w_{x}}}(| \nabla w|^{2})=2w_{x}F^{\prime}(| \nabla w|^{2}),~~~~~
             {{\partial F}\over {\partial w_{y}}}(| \nabla w|^{2})=2w_{y}F^{\prime}(| \nabla w|^{2})
\end{equation*}
where $F^{\prime}$ denotes the derivative of $F$. It follows from repeated applications of Lemma
\ref{lem3.2} that for the above terms  Lemmas \ref{lem3.3} and \ref{lem3.4} take the following forms:
\begin{lemma}\label{lem3.5}
    Let $s\geq 1$, $F\in C^{[s]+1}(\mathbb{R})$. Then for any $w\in X^s$, we have
    \begin{equation*}
\!\!\!\!\!\!\!\!\!\!\!\!\!\!\!
{{\partial F}\over {\partial w_{x}}}(| \nabla w|^{2})\in H^{s-1}({\Bbb R}^{2}),~~~~~
             {{\partial F}\over {\partial w_{y}}}(| \nabla w|^{2})\in H^{s-1}({\Bbb R}^{2}).
\end{equation*}
     Moreover there is some constant $A(M)$ depending on $M$ (and $s$) such that for
     all $w\in X^s$ with $\Vert ~| \nabla w|~\Vert_{L^{\infty}}\leq M$
        \begin{eqnarray*}
        && \Vert {{\partial F}\over {\partial w_{x}}}(| \nabla w|^{2})\Vert_{s-1} \leq A(M) \Vert w \Vert_{s}, \\
        && \Vert {{\partial F}\over {\partial w_{y}}}(| \nabla w|^{2})\Vert_{s-1} \leq A(M) \Vert w \Vert_{s}~.
        \end{eqnarray*}
\end{lemma}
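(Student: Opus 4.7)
The plan is to reduce Lemma \ref{lem3.5} to a routine application of Lemmas \ref{lem3.2} and \ref{lem3.3} to the explicit factorization $\partial F/\partial w_{x}=2w_{x}F'(|\nabla w|^{2})$ (and the symmetric expression for the $y$-component). Since $F\in C^{[s]+1}(\mathbb{R})$, the derivative $F'$ belongs to $C^{[s]}(\mathbb{R})\subseteq C^{[s-1]+1}(\mathbb{R})$, which is exactly the regularity required to invoke Lemma \ref{lem3.3} at Sobolev exponent $s-1$. The only genuine difficulty is that Lemma \ref{lem3.3} requires the composing function to vanish at $0$, and in general $F'(0)\neq 0$, so a small shift is needed before the composition estimate can be applied.

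First I would treat the argument $|\nabla w|^{2}=w_{x}^{2}+w_{y}^{2}$. For $w\in X^{s}$ with $\||\nabla w|\|_{L^{\infty}}\leq M$ we have $w_{x},w_{y}\in H^{s-1}\cap L^{\infty}$, so Lemma \ref{lem3.2} gives
\begin{equation*}
\|w_{x}^{2}\|_{s-1}\leq C\,\|w_{x}\|_{L^{\infty}}\|w_{x}\|_{s-1}\leq CM\,\|w\|_{s},
\end{equation*}
and analogously for $w_{y}^{2}$. Summing yields $|\nabla w|^{2}\in H^{s-1}$ with $\||\nabla w|^{2}\|_{s-1}\leq CM\|w\|_{s}$ and $\||\nabla w|^{2}\|_{L^{\infty}}\leq M^{2}$.

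Next I would set $G(u)=F'(u)-F'(0)$, so that $G(0)=0$ and $G\in C^{[s-1]+1}(\mathbb{R})$. Applying Lemma \ref{lem3.3} with this $G$ and the argument $|\nabla w|^{2}$ (whose $L^{\infty}$ norm is controlled by $M^{2}$) produces a constant $A_{1}(M)$ with
\begin{equation*}
\|G(|\nabla w|^{2})\|_{s-1}\leq A_{1}(M)\,\||\nabla w|^{2}\|_{s-1}\leq C\,M\,A_{1}(M)\,\|w\|_{s},
\end{equation*}
while $\|G(|\nabla w|^{2})\|_{L^{\infty}}$ is bounded by a constant depending only on $M$.

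Finally I would assemble the estimate for $\partial F/\partial w_{x}=2w_{x}(F'(0)+G(|\nabla w|^{2}))$. The linear piece satisfies $\|F'(0)\,w_{x}\|_{s-1}\leq |F'(0)|\,\|w\|_{s}$, and Lemma \ref{lem3.2} controls the nonlinear piece:
\begin{equation*}
\|w_{x}\,G(|\nabla w|^{2})\|_{s-1}\leq C\bigl(\|w_{x}\|_{s-1}\|G(|\nabla w|^{2})\|_{L^{\infty}}+\|w_{x}\|_{L^{\infty}}\|G(|\nabla w|^{2})\|_{s-1}\bigr),
\end{equation*}
and the right hand side is bounded by a quantity of the form $A(M)\|w\|_{s}$ after absorbing the $M$-dependent constants. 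The estimate for $\partial F/\partial w_{y}$ is identical after exchanging the roles of $w_{x}$ and $w_{y}$. The only mildly delicate point, worth stating explicitly, is the splitting $F'=F'(0)+G$: without it one would attempt to apply Lemma \ref{lem3.3} to $F'$, which is illegitimate unless $F'(0)=0$; with it the whole argument is a chain of product and composition inequalities.
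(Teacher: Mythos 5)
Your proof is correct and follows essentially the same route as the paper, which states only that Lemma \ref{lem3.5} ``follows from repeated applications of Lemma \ref{lem3.2}'' to the factorization $\partial F/\partial w_{x}=2w_{x}F'(|\nabla w|^{2})$ together with Lemma \ref{lem3.3}. Your explicit splitting $F'=F'(0)+G$ to meet the $f(0)=0$ hypothesis of Lemma \ref{lem3.3} is a detail the paper leaves implicit, and you handle it correctly.
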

\begin{lemma}\label{lem3.6}
    Let $s\geq 1$, $F\in C^{[s]+1}(\Bbb R)$. Then for any $M>0$ there is some constant $B(M)$ depending on $M$ (and $s$)
    such that for     all $~w_{1},w_{2}\in X^{s}$ with
    $~\Vert  w_{1}\Vert_{X^{s}} \leq M$, $~\Vert w_{2}\Vert_{X^{s}} \leq M~$ we have
    \begin{eqnarray*}
        && \Vert {{\partial F}\over {\partial w_{x}}}(| \nabla w_{1}|^{2})
             -{{\partial F}\over {\partial w_{x}}}(| \nabla w_{2}|^{2})\Vert_{s-1} \leq B(M) \Vert w_{1}-w_{2} \Vert_{s}, \\
        && \Vert {{\partial F}\over {\partial w_{y}}}(| \nabla w_{1}|^{2})
             -{{\partial F}\over {\partial w_{y}}}(| \nabla w_{2}|^{2})\Vert_{s-1} \leq B(M) \Vert w_{1}-w_{2} \Vert_{s}~.
        \end{eqnarray*}
\end{lemma}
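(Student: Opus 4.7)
The plan is to reduce the estimate to Lemmas \ref{lem3.2}, \ref{lem3.3} and \ref{lem3.4} by decomposing the difference
$$\frac{\partial F}{\partial w_x}(|\nabla w_1|^2) - \frac{\partial F}{\partial w_x}(|\nabla w_2|^2) = 2 w_{1,x} F'(|\nabla w_1|^2) - 2 w_{2,x} F'(|\nabla w_2|^2)$$
into a piece that is linear in $w_{1,x} - w_{2,x}$ and a piece that captures the composition difference $F'(|\nabla w_1|^2) - F'(|\nabla w_2|^2)$. Adding and subtracting the middle term $2 w_{1,x} F'(|\nabla w_2|^2)$ yields
$$2(w_{1,x} - w_{2,x}) F'(|\nabla w_2|^2) + 2 w_{1,x} \bigl[F'(|\nabla w_1|^2) - F'(|\nabla w_2|^2)\bigr].$$
I would control each bracket by a product estimate (Lemma \ref{lem3.2}) combined with, respectively, a direct smoothness estimate on $F'(|\nabla w_2|^2)$ and a Lipschitz estimate on the composition difference coming from Lemma \ref{lem3.4}.

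For the second bracket I would first observe that, by Lemma \ref{lem3.2} applied to $w_{i,x}^2$ and $w_{i,y}^2$ together with the definition of the $X^s$-norm, one has $\Vert |\nabla w_i|^2 \Vert_{s-1} + \Vert |\nabla w_i|^2 \Vert_{L^\infty} \le C(M)$ for $i=1,2$. Lemma \ref{lem3.4}, applied with $s$ replaced by $s-1$ and $f=F'$, then gives
$$\Vert F'(|\nabla w_1|^2) - F'(|\nabla w_2|^2)\Vert_{s-1} \le B(M) \Vert |\nabla w_1|^2 - |\nabla w_2|^2\Vert_{s-1}.$$
Factoring $|\nabla w_1|^2 - |\nabla w_2|^2 = (w_{1,x}+w_{2,x})(w_{1,x}-w_{2,x}) + (w_{1,y}+w_{2,y})(w_{1,y}-w_{2,y})$ and invoking Lemma \ref{lem3.2} reduces the right-hand side to $C(M)\Vert w_1 - w_2\Vert_s$. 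A final application of Lemma \ref{lem3.2} to multiply by $w_{1,x}$, using $\Vert w_{1,x}\Vert_{s-1} + \Vert w_{1,x}\Vert_{L^\infty} \le \Vert w_1\Vert_{X^s} \le M$, closes the estimate for this bracket.

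The first bracket requires slightly more care because Lemma \ref{lem3.3} demands $f(0)=0$, whereas $F'$ need not vanish at the origin. I would split $F'(\cdot) = F'(0) + g(\cdot)$ with $g(0)=0$. The constant part contributes $2|F'(0)|\Vert w_{1,x} - w_{2,x}\Vert_{s-1} \le 2|F'(0)|\Vert w_1 - w_2\Vert_s$. For $g(|\nabla w_2|^2)$, Lemma \ref{lem3.3} (with $s$ replaced by $s-1$) provides an $H^{s-1}$ bound, while continuity of $F'$ on the range of $|\nabla w_2|^2$ supplies an $L^\infty$ bound; Lemma \ref{lem3.2} then handles the product with $w_{1,x}-w_{2,x}$. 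The argument for $\partial F/\partial w_y$ is identical, with $w_x$ replaced by $w_y$ throughout.

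The main obstacle, beyond this bookkeeping, is the Lipschitz step $\Vert F'(|\nabla w_1|^2) - F'(|\nabla w_2|^2)\Vert_{s-1} \le C(M)\Vert w_1 - w_2\Vert_s$, which is where Lemma \ref{lem3.4} is genuinely needed; everything else is a sequence of product-rule juggles. Any cross terms of the form $\Vert w_{1,x}-w_{2,x}\Vert_{L^\infty}$ that arise from Lemma \ref{lem3.2} are absorbed through the Sobolev embedding in Lemma \ref{lem3.1}, which is consistent with the regime $s>2$ in which the local theory is developed and bounds them by $C\Vert w_1-w_2\Vert_s$.
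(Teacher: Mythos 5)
Your decomposition is the one the paper intends --- the authors give no detailed proof of Lemma \ref{lem3.6}, saying only that it follows from repeated applications of Lemma \ref{lem3.2} together with Lemmas \ref{lem3.3} and \ref{lem3.4} --- and for $s>2$ your argument is complete, including the correct treatment of the $F'(0)\neq 0$ issue via the splitting $F'=F'(0)+g$. The genuine gap is in your final sentence. Every application of Lemma \ref{lem3.2} to a product containing a difference produces a cross term carrying the $L^{\infty}$ norm of that difference: $\Vert w_{1,x}-w_{2,x}\Vert_{L^{\infty}}$ from the first bracket, the same quantity inside the factorization of $|\nabla w_1|^2-|\nabla w_2|^2$ in the Lipschitz step, and also $\Vert F'(|\nabla w_1|^2)-F'(|\nabla w_2|^2)\Vert_{L^{\infty}}\leq C(M)\Vert\, |\nabla (w_1-w_2)|\,\Vert_{L^{\infty}}$ when you multiply the second bracket by $w_{1,x}$ (a cross term you do not mention). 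You dispose of all of these by the embedding $H^{s-1}\hookrightarrow L^{\infty}$, which requires $s>2$. But Lemma \ref{lem3.6} is stated for $s\geq 1$ and is invoked in exactly that form in the proof of Theorem \ref{theo3.8}, where $1\leq s\leq 2$ is allowed and the $X^{s}$ norm carries the $L^{\infty}$ gradient information separately precisely because that embedding fails there. In that regime your argument only yields the weaker conclusion with $B(M)\Vert w_1-w_2\Vert_{X^{s}}$ on the right-hand side.

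That weaker bound is in fact all that is needed to show $K$ is locally Lipschitz on $X^{s}$ in Theorem \ref{theo3.8}, so your proof can be salvaged by restating the conclusion with the $X^{s}$ norm of the difference. If you want the lemma exactly as stated for all $s\geq 1$, the clean route is to bypass the product decomposition entirely and apply the two-variable composition estimate (the vector analogue of Lemma \ref{lem3.4}, recorded as Lemma \ref{lem6.2} in Section 6) directly to $h(u_1,u_2)=2u_1F'(u_1^2+u_2^2)$, which gives $\Vert h(\nabla w_1)-h(\nabla w_2)\Vert_{s-1}\leq B(M)\Vert \nabla w_1-\nabla w_2\Vert_{s-1}\leq B(M)\Vert w_1-w_2\Vert_{s}$ with no $L^{\infty}$ norms of differences ever appearing.
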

We want to emphasize that, while the constants $B(M)$ in Lemmas \ref{lem3.4} and \ref{lem3.6} are indeed local Lipschitz
constants depending on the $X^{s}$-norms, the constants $A(M)$ in Lemmas \ref{lem3.3} and \ref{lem3.5} depend
only on the $L^{\infty}$-norms. This property of $A(M)$ will be used in the proof of Lemma \ref{lem3.9} which
characterizes the type of blow-up.

  When $s>2$ we have the following local well posedness result.
\begin{theorem}\label{theo3.7}
    Suppose $~s>2$ and the decay rate $r$ in (\ref{beta}) satisfies  $~r\geq 2$. For
    $~\varphi, \psi \in H^s({\Bbb R^2})$, there is some $~T>0~$ such that the Cauchy problem (\ref{cau1})-(\ref{cau2}) is
    well posed with solution $~w=w(x,y,t)~$ in $~C^2([ 0,T],H^s({\Bbb R^2}) )~$.
\end{theorem}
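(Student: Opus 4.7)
The plan is to rewrite the Cauchy problem as the first-order system (\ref{system1})--(\ref{system2}) and treat it as a Banach-space-valued ODE on $X^s \times X^s$, then apply the Banach contraction mapping principle to the equivalent integral system $w(t) = \varphi + \int_0^t v(\tau)\,d\tau$, $v(t) = \psi + \int_0^t Kw(\tau)\,d\tau$ in $C([0,T], X^s \times X^s)$. Since $s > 2$, Lemma \ref{lem3.1} gives $H^s \hookrightarrow X^s$ with equivalent norms, so the assumption $\varphi,\psi \in H^s$ places the data in $X^s$, and any $X^s$-valued solution is automatically $H^s$-valued.

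The heart of the argument is to show that $K : X^s \to X^s$ is well defined and locally Lipschitz. The scalar operator $u \mapsto (\beta \ast u)_{x_j}$ has Fourier symbol $i\xi_j \widehat{\beta}(\xi)$, and the decay assumption (\ref{beta}) gives $|\xi_j \widehat{\beta}(\xi)| \le C(1+|\xi|^2)^{(1-r)/2}$; since $r \ge 2$, convolution followed by one derivative gains at least one derivative on the $H^s$ scale. Combined with Lemma \ref{lem3.5}, which places $\partial F/\partial w_x(|\nabla w|^2)$ and $\partial F/\partial w_y(|\nabla w|^2)$ in $H^{s-1}$ with bound $A(M)\|w\|_s$ whenever $\| |\nabla w| \|_{L^\infty} \le M$, Plancherel then yields $\|Kw\|_s \le C\, A(M)\|w\|_s$. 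Lemma \ref{lem3.6} used in the same way produces the local Lipschitz estimate $\|Kw_1 - Kw_2\|_s \le C\, B(M)\|w_1 - w_2\|_s$ on $X^s$-balls. The embedding $H^s \hookrightarrow X^s$ promotes both bounds to the $X^s$-norm.

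With $K$ locally Lipschitz on $X^s$, a standard Picard iteration on a small closed ball in $C([0,T], X^s \times X^s)$ produces, for some $T>0$ depending only on $\|\varphi\|_{X^s}$ and $\|\psi\|_{X^s}$, a unique fixed point $(w,v)$; continuous dependence on $(\varphi,\psi)$ is the usual byproduct of a contraction argument. Differentiating the integral identities gives $w_t = v \in C([0,T], X^s)$ and $v_t = Kw \in C([0,T], X^s)$, whence $w \in C^2([0,T], X^s) = C^2([0,T], H^s)$. The main technical obstacle is precisely the mapping estimate for $K$: one must combine the smoothing action of $\beta$ with the nonlinear composition estimates of Lemmas \ref{lem3.5}--\ref{lem3.6}, and the hypothesis $r \ge 2$ is exactly what is needed so that the $r-1$ derivatives gained from the convolution absorb the outer divergence and keep $Kw$ in $X^s$.
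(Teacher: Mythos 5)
Your proposal is correct and follows essentially the same route as the paper: reduce to the first-order system (\ref{system1})--(\ref{system2}), note that $X^s=H^s$ for $s>2$ by Sobolev embedding, establish that $K$ is locally Lipschitz on $X^s$ by combining the convolution estimate $\Vert \beta\ast u\Vert_s\le C\Vert u\Vert_{s-r}$ with Lemmas \ref{lem3.5}--\ref{lem3.6}, and conclude by the Banach-space ODE existence theorem. Spelling out the Picard iteration explicitly rather than citing the abstract Picard--Lindel\"of theorem is only a presentational difference.
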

\begin{proof}
    Let $w \in H^s(\mathbb{R}^2)$. For $~s>2$, by the Sobolev Embedding Theorem we have
    $~|\nabla w | \in  L^\infty (\mathbb{R}^2)$. Thus $X^{s}=H^{s}({\Bbb R^2})$ and the norm
    $~\Vert  w\Vert_{X^{s}}$ can be replaced by the equivalent   $H^{s}$ norm $~\Vert  w\Vert_{s}$.
    Since (\ref{cau1})-(\ref{cau2}) is formally equivalent to (\ref{system1})-(\ref{system2}), we will use
    the well-known existence-uniqueness (Picard-Lindel\"of) theorem for Banach space valued  systems of  ordinary
    differential equations (for instance, see Theorem 5.1.1 of \cite{ladas}).
    Obviously, all we need is to show that the operator $K$ of (\ref{k})  is locally Lipschitz on $X^s$. We first show
    that $K$ maps $X^{s}$ into $X^{s}$. We estimate the convolution  as
 \begin{equation*}
    \Vert  \beta \ast u \Vert_s = \Vert (1+| \xi |^2)^{s/2} \widehat{\beta}(\xi) \widehat{u}(\xi)\Vert
     \leq  C \Vert (1+| \xi |^2)^{(s-r)/2} \widehat{u}(\xi)\Vert = C \Vert u\Vert_{s-r},
    \end{equation*}
    where we have used inequality (\ref{beta}).  By Lemma \ref{lem3.5} for
    $\Vert ~| \nabla w|~\Vert_{L^{\infty}}\leq M$
    \begin{eqnarray*}
     \Vert \left (\beta \ast {{\partial F}\over {\partial w_{x}}}\right )_{x}\Vert_s
      &\leq & \Vert \beta \ast {{\partial F}\over {\partial w_{x}}}\Vert_{s+1}
    \leq C \Vert {{\partial F}\over {\partial w_{x}}}\Vert_{s+1-r}  \\
      &\leq & C A(M)  \Vert w \Vert_{s+2-r}  \leq C A(M)  \Vert w \Vert_{s}
    \end{eqnarray*}
     where we have used $r\geq 2$. The same holds for the term $\left (\beta \ast {{\partial F}\over {\partial w_{y}}}\right )_{y}$ and
    \begin{equation}
    \!\!\!\!\!\!\!\!\!\!\!\!\!\!\!\!\!\!\!\!\!\!
        \Vert  Kw \Vert_s \leq  C A(M)\Vert w\Vert_{s+2-r}\leq  C A(M)\Vert w\Vert_{s}. \label{kkk}
    \end{equation}
       Similarly, for $w_{1},w_{2}\in X^{s}$ with $\Vert w_{1} \Vert_{s} \leq M$ and $\Vert w_{2} \Vert_{s} \leq M$,
    by Lemma \ref{lem3.6}
    \begin{eqnarray*}
     \!\!\!\!\!\!\!\!\!\!\!\!\!\!\!\!\!\!
      \Vert \left (\beta \ast {{\partial F}\over{\partial w_{x}}}(| \nabla w_{1}|^{2})\right)_{x}
             -\left(\beta \ast {{\partial F}\over {\partial w_{x}}}(| \nabla w_{2}|^{2})\right)_{x}\Vert_{s}
         &\leq & C B(M)  \Vert w_{1}-w_{2} \Vert_{s+2-r} \\
         \!\!\!\!\!\!\!\!\!\!\!\!\!\!\!\!\!\!
                 &\leq & C B(M)  \Vert w_{1}-w_{2} \Vert_{s}.
    \end{eqnarray*}
     As above, the same holds for the term $\left (\beta \ast {{\partial F}\over {\partial w_{y}}}\right )_{y}$.
     So, $K$ is locally Lipschitz on $X^{s}$ and thus the local well posedness of the Cauchy problem is established.
\end{proof}

When $r>3$ in (\ref{beta}), the extra regularizing effect of $\beta$ allows us to improve the result in Theorem
\ref{theo3.7}  to the case of $s\geq 1$.
\begin{theorem}\label{theo3.8}
    Suppose $~s\geq 1~$ and the decay rate $r$ in (\ref{beta}) satisfies  $~r> 3$. For $~\varphi, \psi \in X^{s}$,
    there is some $~T>0~$ such that the Cauchy problem (\ref{cau1})-(\ref{cau2})
    is well posed with solution $w=w(x,y,t)$ in $~C^2([ 0,T],X^s)~$.
\end{theorem}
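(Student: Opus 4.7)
The plan is to repeat the Picard--Lindelöf scheme of Theorem \ref{theo3.7} directly on the Banach space $X^{s}$, taking $(w,v)$ as state variables for the first--order system \eqref{system1}--\eqref{system2}. Since for $1\le s\le 2$ the embedding $H^{s}(\mathbb{R}^{2})\hookrightarrow L^{\infty}$ fails, the space $X^{s}$ genuinely carries more information than $H^{s}$, and we must estimate $Kw$ in the full $X^{s}$ norm \eqref{norm}, i.e.\ both its $H^{s}$ norm and the $L^{\infty}$ norm of its first partial derivatives. The decisive gain is that the convolution with $\beta$ provides $r$ extra derivatives in the frequency side, so when $r>3$ we can afford to spend one derivative on $K$ and still land two derivatives above the Sobolev threshold in two dimensions. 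This is what buys the $L^{\infty}$ control that was automatic for $s>2$.

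First, I would control the $H^{s}$ part. Arguing as in \eqref{kkk}, the convolution estimate $\Vert\beta\ast u\Vert_{s+1}\le C\Vert u\Vert_{s+1-r}$ combined with Lemma \ref{lem3.5} gives, for $\Vert\,|\nabla w|\,\Vert_{L^{\infty}}\le M$,
\begin{equation*}
\Vert Kw\Vert_{s}\le CA(M)\Vert w\Vert_{s+2-r}\le CA(M)\Vert w\Vert_{s},
\end{equation*}
because $r\ge 2$. Next, for the $L^{\infty}$ part of the derivatives of $Kw$: the quantity $(Kw)_{x}$ is a sum of terms of the form $(\beta\ast\partial F/\partial w_{i})_{xx}$ or $(\beta\ast\partial F/\partial w_{i})_{xy}$. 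I would pick any $s'$ with $1<s'\le s+r-3$, which exists precisely because $s\ge 1$ and $r>3$. Then, using $|\widehat{\beta}(\xi)|\le C(1+|\xi|^{2})^{-r/2}$ together with the fact that two partial derivatives contribute a factor of $|\xi|^{2}$,
\begin{equation*}
\Vert (\beta\ast g)_{x_{i}x_{j}}\Vert_{s'}\le C\Vert g\Vert_{s'+2-r}\le C\Vert g\Vert_{s-1},
\end{equation*}
and applying Lemma \ref{lem3.5} to $g=\partial F/\partial w_{i}(|\nabla w|^{2})$ yields $H^{s'}$ control by $CA(M)\Vert w\Vert_{s}$. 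The Sobolev embedding of Lemma \ref{lem3.1} (with $n=2$ and $s'>1$) then gives $\Vert(Kw)_{x}\Vert_{L^{\infty}}\le CA(M)\Vert w\Vert_{s}$; the same bound holds for $(Kw)_{y}$. Together these prove $K:X^{s}\to X^{s}$ with $\Vert Kw\Vert_{X^{s}}\le CA(M)\Vert w\Vert_{X^{s}}$.

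Second, for the locally Lipschitz property, I repeat the same two estimates with Lemma \ref{lem3.6} in place of Lemma \ref{lem3.5}, obtaining for $\Vert w_{1}\Vert_{X^{s}},\Vert w_{2}\Vert_{X^{s}}\le M$ the bound $\Vert Kw_{1}-Kw_{2}\Vert_{X^{s}}\le CB(M)\Vert w_{1}-w_{2}\Vert_{s}\le CB(M)\Vert w_{1}-w_{2}\Vert_{X^{s}}$. With $K$ locally Lipschitz on $X^{s}$, the Banach space version of the Picard--Lindelöf theorem applied to \eqref{system1}--\eqref{system2} on $X^{s}\times X^{s}$ furnishes a unique solution $(w,v)\in C^{1}([0,T],X^{s}\times X^{s})$ for some $T>0$, and then $w_{tt}=Kw\in C([0,T],X^{s})$ promotes the regularity to $w\in C^{2}([0,T],X^{s})$.

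The main obstacle is the $L^{\infty}$ control of $\nabla(Kw)$: in contrast with Theorem \ref{theo3.7}, the $H^{s}$ norm alone no longer sees $L^{\infty}$, so one must exploit the full $r$ derivatives recovered from $\widehat{\beta}$ to reach a Sobolev index strictly above $n/2=1$ after taking two derivatives of the convolution, which is exactly the algebraic content of the threshold $r>3$ when $s\ge 1$.
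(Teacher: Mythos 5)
Your proof is correct and follows essentially the same route as the paper: the $H^{s}$ part of $\Vert Kw\Vert_{X^{s}}$ is handled exactly as in (\ref{kkk}), and the $L^{\infty}$ control of $\nabla(Kw)$ is obtained by trading the two derivatives against the $r$ derivatives gained from $\widehat{\beta}$ and then applying the Sobolev embedding at an index strictly above $n/2=1$ (the paper's choice $1+\epsilon=r-2$ is just one admissible value of your $s'$). The concluding Picard--Lindelöf and Lipschitz steps likewise match the paper's argument.
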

\begin{proof}
    Similar to the proof of Theorem \ref{theo3.7} it suffices to show that the map $K$ given in (\ref{k}) is
    locally Lipschitz on $X^{s}$. Recall that
    $\Vert  Kw \Vert_{X^{s}}=\Vert  Kw \Vert_{s}+\Vert  (Kw)_{x} \Vert_{L^{\infty}}+\Vert  (Kw)_{y} \Vert_{L^{\infty}}$.
    The term $\Vert  Kw \Vert_{s}$ can be estimated by $\Vert  w \Vert_{s}$ as above. For $\epsilon=r-3>0$ we have
    \begin{eqnarray}
    &&
        \Vert  (Kw)_{x} \Vert_{L^{\infty}} \leq C \Vert  (Kw)_{x} \Vert_{1+\epsilon}
           \leq C \Vert  Kw \Vert_{2+\epsilon} \leq C \Vert  Kw \Vert_{s+1+\epsilon} \nonumber \\
    && ~~~~~~~~~~~~~~~~ \leq C A(M)  \Vert w \Vert_{s+3+\epsilon-r} = C A(M) \Vert w \Vert_{s}. \label{cam}
    \end{eqnarray}
    where we have used (\ref{kkk}) and the Sobolev Embedding Theorem. The same holds for $(Kw)_{y}$
    and a similar estimate as in the proof of Theorem  \ref{theo3.7} shows that $K$ is locally Lipschitz on $X^{s}$.
\end{proof}

 The solution of (\ref{cau1})-(\ref{cau2}) can be extended to a maximal interval $[ 0,T_{\max})$
 where  finite $T_{\max}$ is characterized by the blow up condition
 \begin{equation*}
    \limsup_{t\rightarrow T_{\max}^{-}}
    \left( \Vert w(t)\Vert_{X^{s}}+\Vert w_{t}(t)\Vert_{X^{s} }\right)=\infty.
 \end{equation*}
 Obviously $T_{\max}=\infty$, i.e. there is a global solution, if and only if for any $T<\infty$
 \begin{equation*}
    \limsup_{t\rightarrow T^{-}}  \left( \Vert w(t)\Vert_{X^{s}}+\Vert w_{t}(t)\Vert_{X^{s}}\right) <\infty.
 \end{equation*}
The lemma below characterizes the type of blow-up; namely blow-up occurs in the $L^{\infty}$-norm of $| \nabla w|$.
 \begin{lemma}\label{lem3.9}
    Suppose  that the conditions of Theorem \ref{theo3.7} or Theorem \ref{theo3.8} hold.
    Then $T_{\max}=\infty$,  i.e. there is a global solution of the Cauchy problem (\ref{cau1})-(\ref{cau2}), if and only if for any $T>0$
    \begin{equation*}
        \limsup_{t\rightarrow T^{-}} (\Vert w_{x}(t)\Vert_{L^{\infty}} +\Vert w_{y}(t)\Vert_{L^{\infty}})
        < \infty.
    \end{equation*}
 \end{lemma}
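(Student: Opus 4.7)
The forward implication is immediate: if $T_{\max}=\infty$, then $w\in C^2([0,T],X^s)$ for every finite $T$, so in particular $\|w_x(t)\|_{L^\infty}+\|w_y(t)\|_{L^\infty}$ is bounded on $[0,T]$ by Sobolev embedding (under Theorem \ref{theo3.7}) or by definition of the $X^s$ norm (under Theorem \ref{theo3.8}). The substance of the lemma is the converse, and the plan is to contradict the blow-up criterion stated just above the lemma. So suppose, for contradiction, that $T_{\max}<\infty$ and that there exists $M>0$ with $\|w_x(t)\|_{L^\infty}+\|w_y(t)\|_{L^\infty}\le M$ for all $t\in[0,T_{\max})$. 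The crucial input, already flagged by the authors in the paragraph preceding the lemma, is that the constant $A(M)$ in Lemma \ref{lem3.5} depends only on $\Vert|\nabla w|\Vert_{L^\infty}$ and not on $\Vert w\Vert_s$, whereas the bound $\Vert Kw\Vert_s\le CA(M)\Vert w\Vert_s$ from (\ref{kkk}) is linear in $\Vert w\Vert_s$.

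Passing to the first-order system (\ref{system1})-(\ref{system2}) with $v=w_t$, I would run a standard energy estimate on the quantity $E(t)=\Vert w(t)\Vert_s+\Vert v(t)\Vert_s$. From $w_t=v$ we get $\frac{d}{dt}\Vert w\Vert_s\le\Vert v\Vert_s$, while from $v_t=Kw$ and (\ref{kkk}) we get $\frac{d}{dt}\Vert v\Vert_s\le\Vert Kw\Vert_s\le CA(M)\Vert w\Vert_s$. Adding these gives a linear differential inequality $E'(t)\le C_1 E(t)$ with $C_1=\max(1,CA(M))$ depending only on $M$, so Gronwall's inequality yields $E(t)\le E(0)e^{C_1 T_{\max}}$ for all $t\in[0,T_{\max})$. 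Thus $\Vert w(t)\Vert_s$ and $\Vert w_t(t)\Vert_s$ stay bounded up to $T_{\max}$.

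Under Theorem \ref{theo3.7} we have $X^s=H^s$, so the above already contradicts the blow-up characterization. Under Theorem \ref{theo3.8} we additionally need to control $\Vert w_x\Vert_{L^\infty}+\Vert w_y\Vert_{L^\infty}$ and $\Vert v_x\Vert_{L^\infty}+\Vert v_y\Vert_{L^\infty}$. The first is bounded by $M$ by hypothesis. For the second, integrate $v_t=Kw$ to write $v_x(t)=\psi_x+\int_0^t(Kw(\tau))_x\,d\tau$, and invoke the estimate (\ref{cam}) from the proof of Theorem \ref{theo3.8}, namely $\Vert(Kw)_x\Vert_{L^\infty}\le CA(M)\Vert w\Vert_s$, together with the $H^s$-bound on $w$ obtained above; the analogous statement for $v_y$ is the same. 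This shows $\Vert w(t)\Vert_{X^s}+\Vert w_t(t)\Vert_{X^s}$ is bounded on $[0,T_{\max})$, contradicting the blow-up criterion and forcing $T_{\max}=\infty$.

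The only real subtlety, and the step that genuinely requires the emphasized property of $A(M)$, is turning the energy estimate for $\Vert w\Vert_s$ into a linear differential inequality; a nonlinear Lipschitz constant $B(M)$ depending on the $X^s$-norm (as in Lemmas \ref{lem3.4} and \ref{lem3.6}) would only give $E'\lesssim B(E)E$, which is not enough to rule out finite-time blow-up of the Sobolev norm from an $L^\infty$ gradient bound alone.
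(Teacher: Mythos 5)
Your proposal is correct and takes essentially the same route as the paper: it exploits the fact that $A(M)$ in (\ref{kkk}) depends only on the $L^{\infty}$ bound on the gradient so that the estimate for $Kw$ is linear in $\Vert w\Vert_{s}$, closes via Gronwall (the paper uses the integral form applied to (\ref{wint})--(\ref{wtint}) rather than your differential inequality, which is equivalent), and then controls $\Vert w_{tx}\Vert_{L^{\infty}}$ and $\Vert w_{ty}\Vert_{L^{\infty}}$ by Sobolev embedding when $s>2$ and by (\ref{cam}) when $r>3$, exactly as you do.
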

 \begin{proof}
    Since
    \begin{equation*}
        \Vert w_{x}(t)\Vert_{L^{\infty}} +\Vert w_{y}(t)\Vert_{L^{\infty}} \leq  \Vert w(t)\Vert_{X^{s}},
    \end{equation*}
    it suffices to prove that if the solution  exists for $t\in [ 0,T)$ and
     $\Vert w_{x}(t)\Vert_{L^{\infty}} +\Vert w_{y}(t)\Vert_{L^{\infty}} \leq M$ for all $0\leq t < T$ then both
    $\Vert w(t)\Vert_{X^{s}}$ and $\Vert w_{t}(t)\Vert_{X^{s}}$ stay bounded. Integrating equation
    (\ref{cau1}) twice and calculating the resulting double integral as an    iterated integral,  we obtain
    \begin{eqnarray}
    w(t) &=& \varphi +t\psi+\int_0^t ( t-\tau)  (Kw)(\tau)d\tau, \label{wint} \\
    w_t(t) &=& \psi+\int_0^t  (Kw)(\tau)d\tau.  \label{wtint}
    \end{eqnarray}
    But, by (\ref{kkk}),
    $\Vert ( Kw)(\tau)\Vert_s\leq C A(M) \Vert w(\tau)\Vert_{s+2-r}\leq C A(M) \Vert w(\tau)\Vert_s$ where the constant
    $A(M)$ depends only on $M$.  Hence
    \begin{equation*}
    \!\!\!\!\!\!\!\!\!\!\!\!\!\!\!\!\!\!\!\!\!\!\!\!\!\!\!\!\!\!\!\!\!\!\!\!
        \Vert w(t)\Vert_s + \Vert w_t(t)\Vert_s \leq \Vert \varphi \Vert_s
            +(1+T)\Vert \psi \Vert_s +(1+ T) C A(M)
            \int_0^t \Vert w(\tau)\Vert_s  d\tau ,
    \end{equation*}
    and Gronwall's Lemma gives
    \begin{equation}
        \Vert w (t)\Vert_s
        +\Vert w_t(t)\Vert_s \leq (\Vert \varphi \Vert_s
        +(1+T)\Vert \psi \Vert_s) e^{( 1+T)C A(M) T}  \label{blowup}
    \end{equation}
     for all $t\in \left[ 0,T\right)$. We now estimate $\Vert w_{tx}(t) \Vert_{L^{\infty}}$. The estimate for
      $\Vert w_{ty}(t) \Vert_{L^{\infty}}$ follows similarly.  In the case of Theorem \ref{theo3.7} (where $s>2$),
     by the Sobolev Embedding Theorem,
    \begin{equation*}
    \Vert w_{tx}(t) \Vert_{L^{\infty}} \leq C \Vert w_{t}(t) \Vert_{s}
    \end{equation*}
    so that (\ref{blowup}) applies.  In the case of Theorem \ref{theo3.8} (where $r>3$), from
    (\ref{wtint})
    \begin{equation}
    \Vert w_{tx}(t) \Vert_{L^{\infty}} \leq  \Vert \psi_{x} \Vert_{L^{\infty}}
              + \int_{0}^{t}\Vert (Kw)_{x}(\tau) \Vert_{L^{\infty}} ~d\tau . \label{tx}
    \end{equation}
    By (\ref{cam})
    \begin{equation*}
    \Vert (Kw)_{x}(\tau) \Vert_{L^{\infty}} \leq C A(M)  \Vert w(\tau) \Vert_{s},
    \end{equation*}
    and by (\ref{blowup})
    \begin{equation*}
        \Vert w (\tau)\Vert_s \leq (\Vert \varphi \Vert_s +(1+T)\Vert \psi \Vert_s) e^{( 1+T)C A(M) T}.
    \end{equation*}
    Finally, plugging into (\ref{tx}) we obtain the required estimate
    \begin{equation*}
    \Vert w_{tx}(t) \Vert_{L^{\infty}} \leq  \Vert \psi_{x} \Vert_{L^{\infty}}
              + T C A(M) (\Vert \varphi \Vert_s +(1+T)\Vert \psi \Vert_s) e^{( 1+T)C A(M) T},
    \end{equation*}
    which corresponds to the case $s\ge 1$, $r >3$.
 \end{proof}

 \section{Conservation of Energy and Global Existence}

 In the present section we will prove that locally well defined solutions can be extended to the entire time.

 In the study of global existence of solutions the conservation of energy plays a key role.  First, time
 invariance of the energy functional will be shown. To this end,  we  define  an unbounded linear operator $R$
 on $H^{s}({\Bbb R^2})$  as
 $~R u=\mathcal{F}^{-1} \left ( (\widehat{\beta}(\xi))^{-\frac{1}{2}} \widehat{u}(\xi) \right )$
 where $\mathcal{F}^{-1}$ denotes  the inverse Fourier transform and  $ \widehat{\beta}(\xi )$ is defined in (\ref{beta}).
 Obviously,
 $~R^p u=\mathcal{F}^{-1} \left ( (\widehat{\beta}(\xi))^{-\frac{p}{2}} \widehat{u}(\xi) \right )$ for a real number $p$ with
 $u$ in the domain of $~R^p$. On the other hand,
  $R^{-2}u= \beta \ast u$ and formally (\ref{cau1}) can be rewritten as
 \begin{equation}
    R^2w_{tt}= \left( {{\partial F}\over {\partial w_{x}}}\right)_{x}
             +\left( {{\partial F}\over {\partial w_{y}}}\right)_{y}. \label{cau13}
 \end{equation}
 Here we have used the fact that convolution commutes with differentiation in the distribution sense, i.e.
 $(\beta \ast u)_{x}=\beta \ast u_{x}$.
\begin{lemma}\label{lem4.1}
    Suppose that the conditions of Theorem \ref{theo3.7} or Theorem \ref{theo3.8} hold and the solution of the Cauchy problem
    (\ref{cau1})-(\ref{cau2}) exists in $C^{2}(\left[ 0,T\right), X^{s})$.
    If $R\psi \in L^2({\Bbb R^2})$, then   $Rw_t(t) \in L^2({\Bbb R^2})$ for all $t\in [0,T)$. Moreover, if
    $R\varphi\in L^2({\Bbb R^2})$, then  $Rw(t)\in L^2({\Bbb R^2})$ for all $t\in [0,T)$.
\end{lemma}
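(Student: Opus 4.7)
The plan is to apply $R$ to the integral representations (\ref{wint}) and (\ref{wtint}) of $w$ and $w_{t}$, and to reduce the claim to a Fourier-side multiplier estimate for $R\circ K$, using that $R$ acts as multiplication by $\widehat{\beta}(\xi)^{-1/2}$ on the Fourier side.

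The key estimate is that $R\circ K$ sends $X^{s}$ boundedly into $L^{2}(\mathbb{R}^{2})$, with constant depending only on $\||\nabla w|\|_{L^{\infty}}$. Since $R^{-2}u=\beta\ast u$, one has
\begin{equation*}
R(Kw)=R^{-1}\!\left(\Bigl(\frac{\partial F}{\partial w_{x}}\Bigr)_{x}+\Bigl(\frac{\partial F}{\partial w_{y}}\Bigr)_{y}\right),
\end{equation*}
so by Plancherel $\|R(Kw)\|^{2}$ is bounded by a constant times
\begin{equation*}
\int \widehat{\beta}(\xi)\,|\xi|^{2}\Bigl(\bigl|\widehat{\partial F/\partial w_{x}}\bigr|^{2}+\bigl|\widehat{\partial F/\partial w_{y}}\bigr|^{2}\Bigr)d\xi.
\end{equation*}
Because $r\ge 2$, the multiplier satisfies $\widehat{\beta}(\xi)\,|\xi|^{2}\le C(1+|\xi|^{2})^{1-r/2}\le C$, so this integral is controlled by the plain $L^{2}$-norms of the two derivatives of $F$. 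Lemma \ref{lem3.5} (which requires only $s\ge 1$ and so is available under the hypotheses of both Theorem \ref{theo3.7} and Theorem \ref{theo3.8}) then gives $\|R(Kw)\|\le CA(M)\|w\|_{s}$ whenever $\||\nabla w|\|_{L^{\infty}}\le M$.

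Next I would apply $R$ to (\ref{wtint}) to obtain $Rw_{t}(t)=R\psi+\int_{0}^{t}R(Kw)(\tau)\,d\tau$ and to (\ref{wint}) to obtain $Rw(t)=R\varphi+tR\psi+\int_{0}^{t}(t-\tau)R(Kw)(\tau)\,d\tau$. Fixing $t<T$, continuity of $\tau\mapsto w(\tau)$ in $X^{s}$ yields uniform bounds for $\|w(\tau)\|_{s}$ and $M(\tau):=\||\nabla w(\tau)|\|_{L^{\infty}}$ on $[0,t]$, so the central estimate gives $\int_{0}^{t}\|R(Kw)(\tau)\|\,d\tau<\infty$. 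Combined with the hypotheses $R\psi,R\varphi\in L^{2}$, this yields $Rw_{t}(t),Rw(t)\in L^{2}(\mathbb{R}^{2})$.

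The main technical point will be justifying that the unbounded operator $R$ commutes with the $L^{2}$-valued Bochner integrals appearing above. I would sidestep this by carrying out the argument entirely in Fourier variables: multiplication by $\widehat{\beta}(\xi)^{-1/2}$ commutes with the $\tau$-integration by Fubini, once the multiplier estimate guarantees joint integrability of the integrand in $(\xi,\tau)$. This reduces the interchange to a purely scalar measure-theoretic step and avoids any operator-domain technicalities for $R$.
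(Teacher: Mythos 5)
Your proposal is correct and follows essentially the same route as the paper: apply $R$ to the integral representations (\ref{wint})--(\ref{wtint}) and show that $R\circ K$ maps $X^{s}$ into $L^{2}$ with a constant controlled by $\Vert\,|\nabla w|\,\Vert_{L^{\infty}}$ and $\Vert w\Vert_{s}$. The only cosmetic difference is that the paper writes $RK$ as convolution with $\alpha$, $\widehat{\alpha}=\widehat{\beta}^{1/2}$, and reuses the derivation of (\ref{kkk}) to land in $H^{s+\frac{r}{2}-2}\subset L^{2}$, whereas you bound the $L^{2}$ norm directly via $\widehat{\beta}(\xi)|\xi|^{2}\leq C$; both rest on the same multiplier estimate and on Lemma \ref{lem3.5}.
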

\begin{proof}
    Formally, from  (\ref{wtint}) we have
    \begin{eqnarray}
        Rw_t(t)= R\psi+\int_0^t  (RKw)(\tau) d\tau ~.  \label{rw}
    \end{eqnarray}
    Note that
    \begin{equation*}
    RKw= \left( \alpha \ast {{\partial F}\over {\partial w_{x}}}\right)_{x}
             +\left(\alpha \ast {{\partial F}\over {\partial w_{y}}}\right)_{y},
   \end{equation*}
   where $\widehat{\alpha}(\xi)=(\widehat{\beta}(\xi))^{1/2}$. Then similar to the derivation of (\ref{kkk})
   (replacing $\beta$ by $\alpha$ and hence $r$ by $r/2$) we get
     \begin{equation*}
        \Vert (RKw)(\tau)\Vert_{s+\frac{r}{2}-2}        \leq C \Vert  w(\tau) \Vert_s.
    \end{equation*}
    Since either ($s>2$ and $r\geq 2$) or ($s\geq 1$ and $r>3$), in both cases we have $s+{r\over 2}-2 > 0$. Thus the
    right-hand side of (\ref{rw})  belongs to $ L^{2}({\Bbb R^2})$ and the  conclusion follows.     The second
    statement follows  similarly from (\ref{wint}).
\end{proof}
\begin{lemma}\label{lem4.2}
    Suppose that the solution of the Cauchy problem (\ref{cau1})-(\ref{cau2}) exists on some interval
    $[ 0,T )$. If $R\psi \in L^{2}({\Bbb R^2})$ and the function $F(| \nabla \varphi|^{2})$ belongs to $L^{1}({\Bbb R^2})$,
    then for any $t\in [0,T)$ the energy
    \begin{equation}
        E( t)={1\over 2}\Vert R w_t( t)\Vert^2 + \int_{\Bbb R^2} F(| \nabla w(t)|^{2}) dxdy \label{energy}
    \end{equation}
    is constant in $[ 0,T)$.
\end{lemma}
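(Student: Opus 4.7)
The plan is to show that $E(t)$ is differentiable on $[0,T)$ with $E'(t)\equiv 0$, by the classical energy-identity procedure: differentiate $E$, invoke the rewritten equation (\ref{cau13}), use the self-adjointness of $R$ (its Fourier multiplier $(\widehat{\beta})^{-1/2}$ is real-valued), and integrate by parts on the right-hand side to recognize a perfect time-derivative that cancels the contribution from the potential term.

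First I would verify that both summands defining $E(t)$ are actually finite for each $t\in[0,T)$. The kinetic term is handled by Lemma \ref{lem4.1}. For the potential term, since $w(t)\in X^s$ the gradient $\nabla w(t)$ is $L^\infty$-bounded by some $M(t)<\infty$, and writing $F(s)=\int_0^s F'(\sigma)\,d\sigma$ together with the smoothness of $F$ gives the pointwise bound $|F(|\nabla w(t)|^2)|\le C(M(t))\,|\nabla w(t)|^2$. Since $w(t)\in H^1(\mathbb{R}^2)$, we have $|\nabla w(t)|^2\in L^1(\mathbb{R}^2)$, and hence $F(|\nabla w(t)|^2)\in L^1(\mathbb{R}^2)$, so $E(t)$ makes sense.

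Next I would perform the formal computation
\[
E'(t)=\langle Rw_t,Rw_{tt}\rangle+\int_{\mathbb{R}^2}\Bigl(\frac{\partial F}{\partial w_x}w_{tx}+\frac{\partial F}{\partial w_y}w_{ty}\Bigr)\,dx\,dy.
\]
The first inner product equals $\langle w_t,R^2w_{tt}\rangle$ by self-adjointness of $R$, which by (\ref{cau13}) becomes $\langle w_t,(\partial F/\partial w_x)_x+(\partial F/\partial w_y)_y\rangle$. One integration by parts in each spatial variable produces exactly the negative of the second summand, yielding $E'(t)=0$, so that $E(t)\equiv E(0)$.

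The main obstacle is to rigorously justify these manipulations. Three items need care: (a) $Rw_{tt}(t)\in L^2(\mathbb{R}^2)$, which follows by computing $\|Rw_{tt}\|^2=\int\widehat{\beta}|\widehat{R^2w_{tt}}|^2\,d\xi\le C\|R^2w_{tt}\|_{-r/2}^2$ and then using (\ref{cau13}) together with Lemma \ref{lem3.5}---one checks that the relevant Sobolev exponent $s-2+r/2$ is nonnegative in both regimes $(s>2,r\ge 2)$ and $(s\ge 1,r>3)$; (b) the integration by parts is legitimate because $\partial F/\partial w_i\in H^{s-1}\subset H^0$ by Lemma \ref{lem3.5} (with $s\ge 1$) while $w_{tx},w_{ty}\in H^{s-1}\subset L^2$; (c) differentiation under the integral in the potential term, which in the minimal-regularity case $s=1$ of Theorem \ref{theo3.8} is the delicate point. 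The standard remedy I would use is to regularize by approximating $(\varphi,\psi)$ by smoother data $(\varphi_n,\psi_n)$, establish the energy identity for the corresponding solutions $w_n$ where all computations are completely routine, and then pass to the limit using the local Lipschitz estimates of Lemma \ref{lem3.6} together with the continuous dependence on initial data built into the Picard-Lindel\"of argument underlying Theorems \ref{theo3.7}--\ref{theo3.8}.
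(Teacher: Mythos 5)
Your proof is correct and follows essentially the same route as the paper: multiply (\ref{cau13}) by $w_t$, use Parseval's identity (equivalently, self-adjointness of $R$), and integrate by parts to exhibit the right-hand side of $E'(t)$ as a perfect time derivative, giving $E'(t)=0$. The paper records only the formal three-line computation; your additional checks---finiteness of the potential term, $Rw_{tt}\in L^{2}$, and the approximation argument for the low-regularity case---are extra justification layered on the same argument rather than a different approach.
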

\begin{proof}
    By Lemma \ref{lem4.1}, $Rw_t(t) \in L^2(\mathbb{R}^2)$.  Multiplying  (\ref{cau13}) by $w_t$,
    integrating  in $x$ and $y$, and  using Parseval's identity we get
    \begin{eqnarray*}
    && 0=\frac{d}{dt}\int_{\Bbb R^2} {1\over 2}(\widehat{\beta}(\xi ))^{-1} |\widehat{w}_t(\xi,t)|^2 d\xi
      +\int_{\Bbb R^2} F'(| \nabla w(t)|^{2})\frac{\partial}{\partial t}(| \nabla w(t)|^{2}) dx dy \\
        && ~=\frac{d}{dt} \int_{\Bbb R^2} \left ( {1\over 2} (Rw_t(t))^2 + F(| \nabla w(t)|^{2}) \right ) dx dy
    \end{eqnarray*}
    which implies  the conservation of energy.
\end{proof}

 The main result of this section is the following theorem.
\begin{theorem}\label{theo4.3}
    Suppose $s\geq 1$ and the decay rate $r$ in (\ref{beta}) satisfies  $~r>4$. Let $\varphi, \psi \in X^s$,
    $R\psi \in L^2({\Bbb R^2}) $ and $~F(| \nabla \varphi|^{2}) \in L^1({\Bbb R^2})$. If there is some $k>0$ so that
    $F(u)\geq -k u$ for all $u \geq 0$,  then the Cauchy problem (\ref{cau1})-(\ref{cau2}) has a global
    solution in $C^{2}([ 0,\infty), X^s)$.
\end{theorem}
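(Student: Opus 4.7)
The plan is to combine the blow-up characterization of Lemma \ref{lem3.9} with conservation of energy (Lemma \ref{lem4.2}) and the lower Sobolev bound implicit in (\ref{beta}). By Lemma \ref{lem3.9}, it suffices to show that for every finite $T>0$ the quantity $\Vert w_x(t)\Vert_{L^{\infty}}+\Vert w_y(t)\Vert_{L^{\infty}}$ stays bounded on $[0,T)$. Since (\ref{beta}) gives the upper bound $\widehat{\beta}(\xi)\leq C(1+|\xi|^{2})^{-r/2}$, one immediately obtains the lower bound
\begin{equation*}
    \Vert Ru\Vert^{2}=\int_{\Bbb R^{2}}(\widehat{\beta}(\xi))^{-1}|\widehat{u}(\xi)|^{2}d\xi
    \geq C^{-1}\int_{\Bbb R^{2}}(1+|\xi|^{2})^{r/2}|\widehat{u}(\xi)|^{2}d\xi = C^{-1}\Vert u\Vert_{r/2}^{2},
\end{equation*}
so $R$ controls the $H^{r/2}$ norm. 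In particular, Lemma \ref{lem4.1} applies and Lemma \ref{lem4.2} is available.

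First I would use the coercivity hypothesis $F(u)\geq -ku$ in the conserved energy to write
\begin{equation*}
    \tfrac{1}{2}\Vert Rw_{t}(t)\Vert^{2}=E(0)-\int_{\Bbb R^{2}}F(|\nabla w(t)|^{2})\,dxdy
    \leq E(0)+k\Vert \nabla w(t)\Vert^{2}.
\end{equation*}
Combining with the lower bound above yields $\Vert w_{t}(t)\Vert_{r/2}\leq C(1+\Vert \nabla w(t)\Vert)$. Integrating the identity $w_{t}=v$ gives $\nabla w(t)=\nabla\varphi+\int_{0}^{t}\nabla w_{t}(\tau)\,d\tau$. Since $r>4\geq 2$, we have $\Vert \nabla w_{t}\Vert\leq \Vert w_{t}\Vert_{1}\leq \Vert w_{t}\Vert_{r/2}$, so
\begin{equation*}
    \Vert \nabla w(t)\Vert \leq \Vert \nabla\varphi\Vert + C\int_{0}^{t}\bigl(1+\Vert \nabla w(\tau)\Vert\bigr)d\tau.
\end{equation*}
Gronwall's lemma then keeps $\Vert \nabla w(t)\Vert$ (hence also $\Vert Rw_{t}(t)\Vert$ and $\Vert w_{t}(t)\Vert_{r/2}$) bounded on any finite interval $[0,T]$.

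Finally, because $r>4$, we have $r/2-1>1=n/2$, so Lemma \ref{lem3.1} (Sobolev embedding) gives $H^{r/2}(\Bbb R^{2})\hookrightarrow W^{1,\infty}(\Bbb R^{2})$, whence
\begin{equation*}
    \Vert \nabla w_{t}(\tau)\Vert_{L^{\infty}}\leq C\Vert w_{t}(\tau)\Vert_{r/2},
\end{equation*}
which is now bounded on $[0,T]$ by the previous step. Since $\varphi\in X^{s}$ guarantees $\nabla\varphi\in L^{\infty}$, integrating in time yields
\begin{equation*}
    \Vert \nabla w(t)\Vert_{L^{\infty}}\leq \Vert \nabla\varphi\Vert_{L^{\infty}}+T\sup_{0\leq\tau\leq T}\Vert \nabla w_{t}(\tau)\Vert_{L^{\infty}}<\infty,
\end{equation*}
and Lemma \ref{lem3.9} then gives $T_{\max}=\infty$.

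The principal obstacle is the passage from the $L^{2}$-level information supplied by the energy to the $L^{\infty}$-level control of $\nabla w$ required by Lemma \ref{lem3.9}. This is precisely where the strengthened assumption $r>4$ (as opposed to $r\geq 2$ or $r>3$ in the local existence results) becomes essential: it is exactly what makes $R$ dominant enough to put $w_{t}$ into an $H^{r/2}$ that embeds into $W^{1,\infty}$ in two space dimensions, closing the loop between energy control and pointwise derivative control.
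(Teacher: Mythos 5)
Your proposal is correct and rests on exactly the same three pillars as the paper's proof: conservation of energy combined with the coercivity hypothesis $F(u)\geq -ku$ to bound $\Vert Rw_t(t)\Vert^2$ by $2E(0)+2k\Vert\,|\nabla w(t)|\,\Vert^2$; the lower bound $\Vert Ru\Vert^2\geq C^{-1}\Vert u\Vert_{r/2}^2$ coming from (\ref{beta}); a Gronwall argument; and the Sobolev embedding made available by $r>4$, closed off by Lemma \ref{lem3.9}. The one place you genuinely diverge is the Gronwall step, and your version is arguably the more careful of the two. The paper differentiates $\Vert w(t)\Vert_{r/2}^2$ and applies Gronwall to that quantity, then reads off $\Vert\,|\nabla w(t)|\,\Vert_{L^\infty}\leq\Vert w(t)\Vert_{r/2}$; this tacitly requires $\Vert\varphi\Vert_{r/2}<\infty$ at $t=0$, which is not guaranteed by the hypotheses when $1\leq s<r/2$ (e.g.\ $s=1$, $r=5$), since then $\varphi\in X^s$ only gives $\varphi\in H^1$ with $\nabla\varphi\in L^\infty$. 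You instead run Gronwall on $\Vert\nabla w(t)\Vert_{L^2}$, which needs only $\nabla\varphi\in L^2$, use the resulting bound on $\Vert w_t(t)\Vert_{r/2}$ (which is finite by the energy inequality regardless of whether $w(t)$ itself lies in $H^{r/2}$), and then recover the $L^\infty$ gradient bound by integrating $\Vert\nabla w_t(\tau)\Vert_{L^\infty}\leq C\Vert w_t(\tau)\Vert_{r/2}$ in time, using $\nabla\varphi\in L^\infty$. So your route reaches the same conclusion under exactly the stated hypotheses, whereas the paper's as written is cleanest when one additionally knows $\varphi\in H^{r/2}$ (in particular when $s\geq r/2$); the trade-off is that your argument takes one extra step at the end, passing through $\Vert\nabla w_t\Vert_{L^\infty}$ rather than applying the embedding directly to $w$.
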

\begin{proof}
    By  Theorem \ref{theo3.8} the Cauchy problem is locally well-posed. Assume $w\in C^{2}([ 0,T), X^s)$  for
    some $T>0$.  Since $F(u)\geq -k u$, for all $t\in [0,T)$ we have
    \begin{eqnarray}
        \Vert R w_t( t)\Vert^2 &=&  2E(0)- 2\int_{\Bbb R^2} F(| \nabla w(t)|^{2}) dx dy, \nonumber \\
        &\leq & 2 E(0)+2k \int_{\Bbb R^2} | \nabla w(t)|^{2} dx dy, \label{ea}
    \end{eqnarray}
    where $E(0)$ is the initial energy. On the other hand  we have
    \begin{eqnarray}
        \Vert R w_t( t)\Vert^2
        &=& \int_{\Bbb R^2}  (\widehat{\beta}(\xi) )^{-1} | \widehat{w}_{t}(\xi,t) |^2 d\xi, \nonumber \\
        &\geq & C^{-1} \int_{\Bbb R^2} (1+| \xi |^2)^{\frac{r}{2}} | \widehat{w}_{t}(\xi,t) |^2 d\xi,\nonumber \\
        &=& C^{-1} \Vert w_t( t)\Vert_{\frac{r}{2}}^2, \label{pa}
    \end{eqnarray}
    where (\ref{beta})  is used.  Combining  (\ref{ea}) and (\ref{pa})
    \begin{eqnarray*}
        \frac{d}{dt}\Vert w(t)\Vert_{\frac{r}{2}}^2
        &\leq & 2 \Vert w(t)\Vert_{\frac{r}{2}} \Vert w_t(t)\Vert_{\frac{r}{2}}\nonumber \\
        &\leq &   \Vert w(t)\Vert_{\frac{r}{2}}^2+ \Vert w_t(t)\Vert_{\frac{r}{2}}^2\nonumber \\
        &\leq &   \Vert w(t)\Vert_{\frac{r}{2}}^2+ C \Vert Rw_t(t)\Vert^2\nonumber \\
        &\leq &   \Vert w(t)\Vert_{\frac{r}{2}}^2+ 2C ( E(0)+k \Vert |\nabla w (t)| \Vert^2 ) \nonumber \\
        &\leq &  2CE(0)+(1+2Ck) \Vert w(t)\Vert_{\frac{r}{2}}^2,
    \end{eqnarray*}
    where $\Vert |\nabla w (t)| \Vert \leq  \Vert w(t)\Vert_{1} \leq  \Vert w(t)\Vert_{\frac{r}{2}}$  is used.
    Gronwall's lemma implies that     $\left\Vert w\left( t\right) \right\Vert_{\frac{r}{2}} $
    stays bounded in $\left[ 0,T\right)$. As $r>4$ we have  $\frac{r}{2}-1>1$ and
    the Sobolev Embedding Theorem implies
    \begin{equation*}
    \Vert |\nabla w (t)| \Vert_{L^{\infty}}\leq \Vert |\nabla w (t)| \Vert_{{\frac{r}{2}}-1}\leq \Vert w(t)\Vert_{{\frac{r}{2}}}.
    \end{equation*}
    We conclude that
    $\left\Vert |\nabla w (t)| (t)\right\Vert_{L^{\infty} }$ also stays bounded in $\left[ 0,T\right)$. By Lemma
    \ref{lem3.9},     this implies a global solution.
\end{proof}

 \section{Blow up }

 In this section a blow-up result for (\ref{cau1})-(\ref{cau2}) will be presented. The following lemma, based on the idea of Levine \cite{levine},
 will be used to prove blow up of solutions in finite time for certain nonlinearities and initial data.
 \begin{lemma}\label{lem5.1}
    Suppose that $~{\cal H}(t)$, $t\geq 0$, is a positive, twice differentiable function satisfying
    ${\cal H}^{\prime \prime }{\cal H}-( 1+\nu) ({\cal H}^{\prime })^2 \geq 0$ where $\nu >0$.
    If ${\cal H}(0) >0$ and ${\cal H}^{\prime }(0) >0$, then
    ${\cal H}(t) \rightarrow \infty $ as $t\rightarrow t_1$ for some
    $t_1\leq {\cal H}(0) /\nu {\cal H}^{\prime }(0) $.
\end{lemma}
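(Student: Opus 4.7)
The plan is to apply the standard Levine-type concavity trick: transform $\mathcal{H}$ into a function that must become nonpositive in finite time, and use positivity of $\mathcal{H}$ to force blow-up. Concretely, I would introduce the auxiliary function $G(t)=\mathcal{H}(t)^{-\nu}$, which is well defined as long as $\mathcal{H}$ stays finite and positive.

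The first step is to compute the derivatives of $G$. A direct calculation gives
\begin{equation*}
G'(t)=-\nu \mathcal{H}^{-\nu-1}\mathcal{H}',\qquad
G''(t)=-\nu \mathcal{H}^{-\nu-2}\bigl[\mathcal{H}''\mathcal{H}-(1+\nu)(\mathcal{H}')^2\bigr].
\end{equation*}
The hypothesis on $\mathcal{H}$ is exactly what is needed to make the bracket nonnegative, so $G''(t)\le 0$ on the interval of existence. Thus $G$ is concave. The initial conditions translate into $G(0)=\mathcal{H}(0)^{-\nu}>0$ and $G'(0)=-\nu\mathcal{H}(0)^{-\nu-1}\mathcal{H}'(0)<0$, using $\mathcal{H}(0),\mathcal{H}'(0)>0$.

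Concavity implies $G$ lies below its tangent line at $t=0$, so
\begin{equation*}
G(t)\le G(0)+tG'(0)=\mathcal{H}(0)^{-\nu}-\nu t\,\mathcal{H}(0)^{-\nu-1}\mathcal{H}'(0).
\end{equation*}
The right-hand side vanishes at $t^\ast=\mathcal{H}(0)/(\nu\mathcal{H}'(0))$. Since $G(t)=\mathcal{H}(t)^{-\nu}$ must remain strictly positive as long as $\mathcal{H}$ is finite, and the linear upper bound becomes nonpositive at $t^\ast$, there must exist $t_1\le t^\ast$ at which $G(t)\to 0^+$, equivalently $\mathcal{H}(t)\to\infty$. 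This yields the claimed bound $t_1\le \mathcal{H}(0)/(\nu\mathcal{H}'(0))$.

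I do not expect serious obstacles here: the argument is essentially algebraic, and the only subtle point is ensuring that $\mathcal{H}$ cannot vanish before blowing up, which follows because $G'(0)<0$ together with concavity forces $G$ to be strictly decreasing on its domain, so $\mathcal{H}$ stays above $\mathcal{H}(0)$ and hence stays positive (so the transformation $G=\mathcal{H}^{-\nu}$ remains valid up to the blow-up time).
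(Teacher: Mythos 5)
Your proof is correct: the computation of $G''$ for $G=\mathcal{H}^{-\nu}$, the resulting concavity, and the tangent-line argument forcing $G$ to reach zero by $t^{\ast}=\mathcal{H}(0)/(\nu\mathcal{H}'(0))$ are all sound. The paper does not write out a proof of this lemma but attributes it to Levine, and your argument is precisely that standard concavity proof, so there is nothing to add.
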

\begin{theorem}\label{theo5.2}
     Suppose that the solution, $w$, of the Cauchy problem (\ref{cau1})-(\ref{cau2}) exists,
     $R\varphi,~ R\psi \in L^2(\Bbb R^2)$ and $~F(| \nabla \varphi|^{2}) \in L^1({\Bbb R^2})$. If there exists a positive number
     $\nu$ such that
    \begin{equation}
       u F^\prime(u) \leq  (1+2\nu) F(u) ~~\mbox{ for all }u \geq 0, \label{blow-condition}
    \end{equation}
    and
    \begin{equation*}
        E(0)= {1\over 2} \Vert R\psi \Vert^2 +\int_{\Bbb R^2} F( | \nabla \varphi|^{2}) dx dy<0,
    \end{equation*}
    then the solution, $w$, of the Cauchy problem (\ref{cau1})-(\ref{cau2}) blows up in finite time.
\end{theorem}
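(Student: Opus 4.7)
\textbf{Proof proposal for Theorem \ref{theo5.2}.}
The plan is to apply Levine's concavity lemma (Lemma \ref{lem5.1}) to the auxiliary functional
\[
\mathcal{H}(t) = \|Rw(t)\|^2 + b(t+t_0)^2,
\]
with positive parameters $b,t_0$ to be chosen at the end. Lemma \ref{lem4.1} guarantees $Rw(t),Rw_t(t)\in L^2(\mathbb{R}^2)$ on the existence interval, so $\mathcal{H}$ is well defined and twice differentiable.

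The central calculation is to evaluate $\frac{d^2}{dt^2}\|Rw\|^2 = 2\|Rw_t\|^2+2\langle Rw,Rw_{tt}\rangle$. Using self-adjointness of $R$ (whose Fourier multiplier $(\widehat\beta)^{-1/2}$ is real) together with the reformulation (\ref{cau13}), I would rewrite $\langle Rw,Rw_{tt}\rangle = \langle w,R^2w_{tt}\rangle$, integrate by parts, and use the identities $\partial F/\partial w_x=2w_xF'(|\nabla w|^2)$ and $\partial F/\partial w_y=2w_yF'(|\nabla w|^2)$ to obtain
\[
\langle Rw,Rw_{tt}\rangle = -2\int_{\mathbb{R}^2} |\nabla w|^2 F'(|\nabla w|^2)\,dx\,dy.
\]
The structural bound (\ref{blow-condition}) estimates the integrand by $(1+2\nu)F(|\nabla w|^2)$, while the conservation law of Lemma \ref{lem4.2} replaces $\int F(|\nabla w|^2)\,dxdy$ by $E(0)-\tfrac12\|Rw_t\|^2$. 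Because $E(0)<0$, the resulting inequality reads
\[
\mathcal{H}''(t) \ge 4(1+\nu)\|Rw_t(t)\|^2 - 4(1+2\nu)E(0) + 2b,
\]
with a strictly positive ``gain'' $-4(1+2\nu)E(0)>0$.

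A Cauchy-Schwarz bound in the augmented Hilbert space $L^2(\mathbb{R}^2)\times\mathbb{R}$ applied to the pairs $(Rw,\sqrt{b}(t+t_0))$ and $(Rw_t,\sqrt{b})$ yields
\[
(\mathcal{H}'(t))^2 \le 4\mathcal{H}(t)\bigl(\|Rw_t(t)\|^2+b\bigr).
\]
Choosing $b=-2E(0)>0$ balances the constants so that $\mathcal{H}''\ge 4(1+\nu)(\|Rw_t\|^2+b)$, whence $\mathcal{H}''\mathcal{H}\ge(1+\nu)(\mathcal{H}')^2$. Since $\mathcal{H}(0)=\|R\varphi\|^2+bt_0^2>0$, and $t_0$ can be taken large enough that $\mathcal{H}'(0)=2\langle R\varphi,R\psi\rangle+2bt_0>0$, Lemma \ref{lem5.1} forces $\mathcal{H}(t)\to\infty$ at some finite $t_1\le\mathcal{H}(0)/\nu\mathcal{H}'(0)$. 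In particular $\|Rw(t)\|\to\infty$ as $t\to t_1$, which by Lemma \ref{lem4.1} is incompatible with continued existence, so the solution blows up in finite time.

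The main technical obstacle is justifying the formal differentiation and integration by parts in $\langle Rw,Rw_{tt}\rangle$: because $R$ is an unbounded Fourier multiplier, one should work directly in frequency via Parseval, using Lemma \ref{lem4.1} to keep $(\widehat\beta)^{-1}|\widehat w|^2$ and $(\widehat\beta)^{-1}|\widehat{w_t}|^2$ integrable, rather than manipulating $R$ as a differential operator in physical space.
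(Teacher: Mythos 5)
Your proposal is correct and follows essentially the same route as the paper: the same Levine functional $\mathcal{H}(t)=\Vert Rw(t)\Vert^2+b(t+t_0)^2$, the same identity $\langle Rw,Rw_{tt}\rangle=-2\int|\nabla w|^2F'(|\nabla w|^2)\,dx\,dy$ combined with (\ref{blow-condition}) and energy conservation, and the choice $b=-2E(0)$ (the paper takes $b\le -2E(0)$). Your only deviation is a cleaner Cauchy--Schwarz in the augmented space $L^2(\mathbb{R}^2)\times\mathbb{R}$ giving $(\mathcal{H}')^2\le 4\mathcal{H}(\Vert Rw_t\Vert^2+b)$ directly, which streamlines the paper's term-by-term expansion but is not a different argument.
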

\begin{proof}
    We assume that the global solution to (\ref{cau1})-(\ref{cau2}) exists. Then, by Lemma \ref{lem4.1},
    $Rw(t), ~Rw_t(t) \in L^2({\Bbb R^2})$   for all $t>0$. Let ${\cal H}(t)=  \Vert Rw (t) \Vert^2+b( t+t_0)^2$ where
    $b$ and $t_0$ are positive constants to be  determined later. Then we have
    \begin{eqnarray*}
        {\cal H}^\prime         &=& 2\langle Rw_t, Rw \rangle +2b( t+t_0)  \\
       {\cal H}^{\prime\prime}  &=& 2 \Vert Rw_t \Vert^2
            +2 \langle  Rw_{tt}, Rw \rangle +2b.
    \end{eqnarray*}
    Note that ${\cal H}^{\prime }(0) =2\langle R\varphi ,R\psi \rangle+2bt_0 > 0$
    for sufficiently large $t_0$.  Using  the inequality $u F^\prime(u) \leq  (1+2\nu) F(u)$ together with
    (\ref{ea}) and  (\ref{pa}) we have
    \begin{eqnarray*}
         \langle Rw_{tt}, Rw  \rangle &=&  \langle R^2w_{tt},w \rangle \\
                &=& -2\int_{\Bbb R^2} | \nabla w|^{2} F^\prime(| \nabla w|^{2}) dx dy \\
        &\geq &-2 (1+2 \nu) \int_{\Bbb R^2} F(| \nabla w|^{2}) dx dy\\
        &=& (1+2\nu) \left( \Vert Rw_t\Vert^2-2E(0) \right ),
    \end{eqnarray*}
    so that
    \begin{equation*}
        {\cal H}^{\prime \prime }
            \geq 4( 1+\nu)  \Vert Rw_t \Vert^2-4( 1+2\nu )E(0)+2b.
    \end{equation*}%
    On the other hand, using $2ab \leq a^2+b^2$ and Cauchy-Schwarz inequalities we have
    \begin{eqnarray*}
    \!\!\!\!\!\!\!\!\!\!\!\!\!\!\!\!\!\!\!\!\!\!\!\!\!\!\!\!
        \left({\cal H}^{\prime }\right )^2  &=& 4\left [\langle Rw, Rw_t \rangle +b(t+t_0) \right]^2  \\
    \!\!\!\!\!\!\!\!\!\!\!\!\!\!\!\!\!\!\!\!\!\!\!\!\!\!\!\!
                &\leq & 4 \left( \Vert Rw \Vert^2 ~\Vert Rw_t\Vert^2
                +2b(t+t_0) \Vert Rw \Vert ~\Vert Rw_t \Vert  +b^2 (t+t_0)^2 \right ) \\
    \!\!\!\!\!\!\!\!\!\!\!\!\!\!\!\!\!\!\!\!\!\!\!\!\!\!\!\!
                &\leq & 4 \left( \Vert Rw\Vert^2~\Vert Rw_t\Vert^2
                +b\Vert Rw\Vert^2 +b\Vert Rw_t\Vert^2 (t+t_0)^{2}
                +b^2 (t+t_0)^2 \right ).
    \end{eqnarray*}
    Thus
    \begin{eqnarray*}
    && \!\!\!\!\!\!\!\!\!\!\!\!\!\!\!\!\!\!\!\!\!\!\!\!\!\!
     {\cal H}^{\prime \prime }  {\cal H} - ( 1+\nu)({\cal H}^{\prime })^2 \\
    && \geq  \left (4( 1+\nu)  \Vert Rw_t \Vert^2 -4 (1+2 \nu)  E(0)+2b \right)
                \left (  \Vert Rw \Vert^2 + b(t+t_0)^2  \right )\\
    && -4(1+\nu) \left( \Vert Rw\Vert^2~\Vert Rw_t\Vert^2
            +b \Vert Rw \Vert^2+b \Vert Rw_t\Vert^2 (t+t_0)^2+b^2(t+t_0)^2 \right )\\
    && =-2(1+2\nu) (b+2E(0)) {\cal H}.
    \end{eqnarray*}
    Now if we choose $b\leq -2E(0)$, this gives
    \begin{equation*}
        {\cal H}^{\prime \prime }(t) {\cal H}(t)-(1+\nu)\left ({\cal H}^{\prime }(t) \right)^2\geq 0.
    \end{equation*}
     According to the Blow-up Lemma \ref{lem5.1}, this implies
    that ${\cal H}(t)$, and thus $\Vert Rw(t)\Vert^{2}$ blows up in finite time.
\end{proof}

Consider a typical nonlinearity of the form $F(u)=a u^{q}$ with $q>0$. When $a>0$, Theorem \ref{theo4.3} will apply and
a global solution exists (for suitable $s$, $r$ and initial data). When  $a<0$, the blow-up condition
(\ref{blow-condition}) of Theorem \ref{theo5.2} holds if and only if $q>1$. This observation says that the global existence
result of Theorem \ref{theo4.3} is essentially sharp compared to the blow-up result.

Finally, we  conclude with a short discussion on the condition $E(0)<0$. If $F(u)\geq 0$ for all $u\geq 0$,
then by Theorem \ref{theo4.3}, there is  a global solution. If $F(u)$ is negative on some interval $I$,
we can choose $ \varphi$ with support  in $I$ so that $\int_{\Bbb R^2} F(| \nabla \varphi|^{2}) dx dy<0 $.
Hence, when $\psi= 0$ or $R\psi$ is sufficiently small we get $E(0)<0$. This also shows that blow up may occur even for
 small initial data.

 \section{The Anisotropic Case}

In this section we will consider the more general nonlinear term of the form $F(w_{x}, w_{y}) $ with
$F(0,0)=0$ rather than the isotropic form $F(|\nabla w|^{2})$. Such a form appears as the strain energy function of anisotropic
materials \cite{horgan}. With minor modifications on the assumptions, all the results of Sections 2-5
can be generalized. Following the layout of the manuscript we will  pinpoint out these modifications and
briefly explain how the proofs will change accordingly if we replace $F( |\nabla w|^{2}) $  by
$\tilde{F}(w_{x}, w_{y})=\tilde{F}( \nabla w)$ where the symbol $~\tilde{}~$ is employed to distinguish the anisotropic form of the strain
energy function.

\subsection*{Local Existence and Uniqueness of Solutions}
The main step in Section 2 is to show that the map $K$ of (\ref{k}) is locally Lipschitz on the Banach  space $X^{s}$.
To deal with the more general nonlinearity $\tilde{F}( \nabla w) $ we will use the vector versions of
Lemmas \ref{lem3.3} and \ref{lem3.4} \cite{runst}. For a vector function $U=(u_{1},u_{2})$ the notation
$~\left\Vert U\right\Vert =\left\Vert u_{1}\right\Vert +\left\Vert u_{2}\right\Vert ~$ will be employed where
$\left\Vert {.}\right\Vert$ denotes a given norm.
\begin{lemma}\label{lem6.1}
Let $s\geq 0,$ $h\in C^{[s]+1}(\mathbb{R}^{2})$ with $h(0)=0$. Then for any
$U=(u_{1},u_{2})\in (H^{s}(\mathbb{R}^{2})\cap L^{\infty }(\mathbb{R}^{2}))^{2}$, we have
$h(U)\in H^{s}(\mathbb{R}^{2})\cap L^{\infty}(\mathbb{R}^{2})$. Moreover there is some constant $A(M)$ depending on $M$
such that for all $U\in (H^{s}(\mathbb{R}^{2})\cap L^{\infty }(\mathbb{R}^{2}))^{2}$ with $\left\Vert U\right\Vert_{L^{\infty}}\leq M$
\begin{equation*}
    {\left\Vert h(U)\right\Vert }_{s}\leq A(M){\left\Vert U\right\Vert }_{s}~.
\end{equation*}
\end{lemma}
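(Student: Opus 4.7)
The plan is to mirror the proof of the scalar Lemma \ref{lem3.3}, with only the bookkeeping changes that reflect the fact that $U$ is now two-component. The starting point is the fundamental theorem of calculus: since $h(0)=0$,
\begin{equation*}
h(U) = \int_0^1 \frac{d}{dt} h(tU)\,dt = u_1 \tilde h_1(U) + u_2 \tilde h_2(U),
\end{equation*}
where $\tilde h_i(U) = \int_0^1 (\partial_i h)(tU)\,dt$. Each $\partial_i h$ lies in $C^{[s]}(\mathbb{R}^2)$, and the hypothesis $\|U\|_{L^\infty} \leq M$ ensures that the segment $tU$, $t\in[0,1]$, stays inside the ball $\{|v|\leq M\}\subset\mathbb{R}^2$; hence $\|\tilde h_i(U)\|_{L^\infty}\leq\sup_{|v|\leq M}|\partial_i h(v)|$, a quantity depending only on $M$.

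Next I would apply the algebra estimate of Lemma \ref{lem3.2} to each product $u_i \tilde h_i(U)$ to obtain
\begin{equation*}
\|h(U)\|_s \leq C\sum_{i=1}^2\bigl(\|u_i\|_s\,\|\tilde h_i(U)\|_{L^\infty} + \|u_i\|_{L^\infty}\,\|\tilde h_i(U)\|_s\bigr).
\end{equation*}
The first factor in each summand is already controlled by the previous paragraph, so everything reduces to bounding $\|\tilde h_i(U)\|_s$ by $\tilde A(M)\|U\|_s$ with $\tilde A(M)$ independent of $\|U\|_s$. Combining this with $\|u_i\|_{L^\infty}\leq\|U\|_{L^\infty}\leq M$ then yields the stated bound $\|h(U)\|_s\leq A(M)\|U\|_s$.

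This last estimate is the heart of the matter and is precisely the vector-valued analogue of Lemma \ref{lem3.3} applied to $\partial_i h$. For integer $s$ it can be obtained directly by iterating the chain and Leibniz rules and using Lemma \ref{lem3.2} repeatedly, noting that every partial derivative of $\partial_i h$ evaluated at $U$ is bounded in $L^\infty$ by its supremum on $\{|v|\leq M\}$. For non-integer $s$ one either interpolates between integer orders or invokes the Nemytskii-operator estimates in Chapter 5 of \cite{runst}, which are stated there for $C^{[s]+1}$ superpositions of several variables and cover this setting verbatim.

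The main obstacle is exactly this bound on $\|\tilde h_i(U)\|_s$: while the structural argument (FTC plus product estimate) is elementary, controlling the $H^s$ norm of a smooth nonlinear function of a vector argument is itself a genuine Moser-type estimate whose rigorous proof for non-integer $s$ requires Littlewood--Paley or paradifferential techniques. The authors of \cite{runst} carry this out in the required generality, so that once their result is cited the remainder of the proof is just the two lines above.
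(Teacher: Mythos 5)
First, a point of comparison: the paper does not prove Lemma \ref{lem6.1} at all --- it is quoted as the known vector-valued Nemytskij-operator (Moser-type) estimate from Chapter 5 of \cite{runst}, exactly as the scalar Lemma \ref{lem3.3} is quoted earlier. So the real question is how much of the statement can be derived from tools already in the paper and how much must be imported; you correctly identify the superposition estimate as the part that must be imported, and to that extent your proposal is consistent with the paper's treatment.

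However, the reduction you set up contains a concrete flaw and, even after repair, does not gain anything. The intermediate claim $\Vert \tilde h_i(U)\Vert_s \le \tilde A(M)\Vert U\Vert_s$ is false in general: $\tilde h_i(U)=\int_0^1(\partial_i h)(tU)\,dt$ equals $\partial_i h(0)$ wherever $U$ vanishes, so if $\partial_i h(0)\neq 0$ then $\tilde h_i(U)$ does not decay at infinity and is not even in $L^2(\mathbb{R}^2)$, let alone $H^s$. You must first split off the constant, writing $u_i\,\tilde h_i(U)=\partial_i h(0)\,u_i+u_i\bigl(\tilde h_i(U)-\partial_i h(0)\bigr)$, and apply Lemma \ref{lem3.2} only to the second product. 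More seriously, the bound you then need for $\tilde h_i(U)-\partial_i h(0)$ is an instance of the very lemma being proved, applied to $g_i(v)=\int_0^1(\partial_i h)(tv)\,dt-\partial_i h(0)$, which is only of class $C^{[s]}$; since the lemma's hypothesis requires $C^{[s]+1}$ smoothness, the reduction loses one derivative and cannot be closed by bootstrapping from the stated assumptions. (For integer $s$ the direct Leibniz-rule computation applied to $h(U)$ itself does work, as you note, but for fractional $s$ the FTC-plus-product-rule scaffolding merely transfers the difficulty to a less regular composition.) The clean course --- and what the paper implicitly does --- is to invoke the superposition estimate of \cite{runst} directly for the vector-valued composition $h(U)$, with $h(0)=0$ guaranteeing $h(U)\in L^2$; your intermediate decomposition is then unnecessary.
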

\begin{lemma}\label{lem6.2}
Let $s\geq 0$, $h\in C^{[s]+1}(\mathbb{R}^{2})$. Then for any $M>0$ there is some constant $B(M)$ such that for all
$~U_{1},U_{2}\in (H^{s}(\mathbb{R}^{2})\cap L^{\infty }(\mathbb{R}^{2}))^{2}$ with
$~\left\Vert U_{1}\right\Vert_{L^{\infty}}\leq M$, $~\left\Vert U_{2}\right\Vert_{L^{\infty}}\leq M~$ and
$~\left\Vert U_{1}\right\Vert_{s}\leq M$, $~\left\Vert U_{2}\right\Vert _{s}\leq M~$ we have
\begin{equation*}
\!\!\!\!\!\!\!\!\!\!\!\!\!\!\!\!\!\!\!\!\!\!\!\!\!\!{\left\Vert
h(U_{1})-h(U_{2})\right\Vert }_{s}\leq B(M){\left\Vert
U_{1}-U_{2}\right\Vert }_{s}~.
\end{equation*}
\end{lemma}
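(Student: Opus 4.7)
The plan is to mirror the strategy of the scalar Lemma \ref{lem3.4}, reducing the Lipschitz-type bound to the algebra estimate of Lemma \ref{lem3.2} together with the vector composition estimate of Lemma \ref{lem6.1} applied to the partial derivatives of $h$.

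First I would write the difference along the line segment joining $U_{1}$ and $U_{2}$. Setting $V(\tau)=\tau U_{1}+(1-\tau)U_{2}$, the fundamental theorem of calculus gives
\begin{equation*}
h(U_{1})-h(U_{2}) = \int_{0}^{1}\nabla h(V(\tau))\cdot(U_{1}-U_{2})\,d\tau = \sum_{i=1}^{2}(u_{1,i}-u_{2,i})\,G_{i},
\end{equation*}
where $G_{i}=\int_{0}^{1}(\partial_{i}h)(V(\tau))\,d\tau$. Since $V(\tau)$ is a convex combination we have $\Vert V(\tau)\Vert_{L^{\infty}}\le M$ and $\Vert V(\tau)\Vert_{s}\le M$ uniformly in $\tau\in[0,1]$, so Minkowski's integral inequality lets us pass any uniform-in-$\tau$ bound on $\partial_{i}h\circ V(\tau)$ through the $\tau$-integral into $G_{i}$.

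Next I would estimate each product. Splitting off the constant $\partial_{i}h(0)$ by setting $\tilde{h}_{i}(v):=\partial_{i}h(v)-\partial_{i}h(0)$ (so that $\tilde{h}_{i}(0)=0$) and $\tilde{G}_{i}:=\int_{0}^{1}\tilde{h}_{i}(V(\tau))\,d\tau$, we write
\begin{equation*}
(u_{1,i}-u_{2,i})G_{i}=(u_{1,i}-u_{2,i})\tilde{G}_{i}+\partial_{i}h(0)\,(u_{1,i}-u_{2,i}).
\end{equation*}
The second (purely linear) summand has $H^{s}$ norm at most $|\partial_{i}h(0)|\,\Vert u_{1,i}-u_{2,i}\Vert_{s}$. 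For the first, Lemma \ref{lem6.1} applied to $\tilde{h}_{i}$ yields $\Vert \tilde{h}_{i}(V(\tau))\Vert_{s}\le A(M)\Vert V(\tau)\Vert_{s}\le A(M)M$ uniformly in $\tau$, whence $\Vert \tilde{G}_{i}\Vert_{s}\le A(M)M$; the $L^{\infty}$ bound $\Vert \tilde{G}_{i}\Vert_{L^{\infty}}\le\sup_{|v|\le M}|\tilde{h}_{i}(v)|$ is also a finite $M$-dependent constant. Lemma \ref{lem3.2} then gives
\begin{equation*}
\Vert (u_{1,i}-u_{2,i})\tilde{G}_{i}\Vert_{s}\le C\bigl(\Vert u_{1,i}-u_{2,i}\Vert_{s}\Vert \tilde{G}_{i}\Vert_{L^{\infty}}+\Vert u_{1,i}-u_{2,i}\Vert_{L^{\infty}}\Vert \tilde{G}_{i}\Vert_{s}\bigr).
\end{equation*}

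The step I expect to be the main obstacle is the appearance of $\Vert u_{1,i}-u_{2,i}\Vert_{L^{\infty}}$ in the second summand above, which the hypotheses do not directly control by $\Vert U_{1}-U_{2}\Vert_{s}$. When $s>1$ the two-dimensional Sobolev embedding supplies $\Vert u_{1,i}-u_{2,i}\Vert_{L^{\infty}}\le C\Vert u_{1,i}-u_{2,i}\Vert_{s}$ and the argument closes immediately with a constant $B(M)$ depending only on $M$, which is the range actually needed in the applications of this lemma in the paper. For the full range $s\ge 0$ stated in the lemma, however, this embedding fails and one must replace the crude product estimate by a Littlewood--Paley/paraproduct decomposition of $\tilde{h}_{i}(V(\tau))$, exploiting the full $C^{[s]+1}$ smoothness of $h$; this is precisely the Runst--Sickel machinery cited for the scalar Lemma \ref{lem3.4} and carried over component-wise to the vector setting here.
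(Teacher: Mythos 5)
The paper does not actually prove Lemma \ref{lem6.2}: like its scalar counterpart Lemma \ref{lem3.4}, it is quoted from Runst and Sickel \cite{runst}, so your line-segment argument is a supplementary proof attempt rather than a variant of anything in the paper. As far as it goes, it is the standard elementary route and is sound for $s>1$: the decomposition $h(U_1)-h(U_2)=\sum_i (u_{1,i}-u_{2,i})G_i$, the splitting off of the constant $\partial_i h(0)$, and the combination of Lemmas \ref{lem6.1} and \ref{lem3.2} are all correct, and you rightly identify $\Vert u_{1,i}-u_{2,i}\Vert_{L^{\infty}}$ as the obstruction when the embedding $H^{s}(\mathbb{R}^2)\subset L^{\infty}(\mathbb{R}^2)$ fails.

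Two points need correcting. First, your remark that $s>1$ is ``the range actually needed in the applications'' is not right: the lemma is invoked, through the anisotropic analogue of Lemma \ref{lem3.6}, at regularity level $s-1$ with $s\geq 1$, and Theorem \ref{theo3.8} permits $s$ as low as $1$; hence levels $\sigma=s-1\in[0,1]$, where the Sobolev embedding fails, genuinely occur, and for those one must fall back on \cite{runst} --- which is exactly what the paper does for the entire lemma. Second, there is a hidden loss of one derivative in your argument: Lemma \ref{lem6.1} at level $s$ requires the composing function to be of class $C^{[s]+1}$, but you apply it to $\tilde h_i=\partial_i h-\partial_i h(0)$, which under the stated hypothesis $h\in C^{[s]+1}(\mathbb{R}^2)$ is only of class $C^{[s]}$. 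So even in the range $s>1$ your argument establishes the conclusion only for $h\in C^{[s]+2}$; recovering the stated $C^{[s]+1}$ hypothesis requires the sharper composition estimates of \cite{runst} rather than Lemma \ref{lem6.1} used as a black box.
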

Through Lemmas \ref{lem6.1} and \ref{lem6.2}  we obtain the estimates of Lemmas \ref{lem3.5} and \ref{lem3.6}
for $\tilde{F}( \nabla w) $.
With these new estimates, the proofs of Theorems  \ref{theo3.7} and \ref{theo3.8} on local well posedness and of
Lemma \ref{lem3.9} about the blow up criterion follow exactly the same way for the anisotropic form.

\subsection*{Conservation of Energy and Global Existence}
Due to the new estimates on the map $K$ of (\ref{k}), Lemma \ref{lem4.1} holds also
for the general case. In Lemma \ref{lem4.2} the energy must be replaced by
\begin{equation*}
E(t)={\frac{1}{2}}\Vert Rw_{t}(t)\Vert ^{2}+\int_{\mathbb{R}^{2}}\tilde{F}(\nabla
w(t))dxdy .
\end{equation*}
The proof of the energy identity  is the same with the obvious modification
\begin{eqnarray*}
\frac{d}{dt}\int_{\mathbb{R}^{2}}\tilde{F}(\nabla w(t))dxdy
    &=&\int_{\mathbb{R}^{2}}\left( {\frac{{\partial \tilde{F}}}{{\partial w_{x}}}w_{xt}}
        +{\frac{{\partial \tilde{F}}}{{\partial w_{y}}}}{w_{yt}}\right)dxdy \\
    &=&-\int_{\mathbb{R}^{2}}\left[\left( {\frac{{\partial \tilde{F}}}{{\partial w_{x}}}}\right)_{x}
     +\left( {\frac{{\partial \tilde{F}}}{{\partial w_{y}}}}\right)_{y}\right]{w_{t}}dxdy.
\end{eqnarray*}
In Theorem \ref{theo4.3}, we change the assumption on the nonlinearity as $%
\tilde{F}(U)\geq -k|U|^{2}$ for all $U\in \mathbb{R}^{2}.$ Then the estimate (\ref%
{ea}) still holds for the anisotropic case and thus the rest of the proof follows.
We note that  our new assumption $\tilde{F}(U)\geq -k|U|^{2}$ reduces to  the previous condition $%
F(u)\geq -ku$ of Theorem \ref{theo4.3} when the nonlinearity is of the form $F=F(|\nabla w|^{2})$.

\subsection*{Blow up}
The blow-up condition
\begin{equation*}
uF^{\prime }(u)\leq (1+2\nu )F(u)~~\mbox{ for all }u\geq 0,
\end{equation*}%
of Theorem \ref{theo5.2} takes the form
\begin{equation*}
U\cdot \nabla \tilde{F}(U)\leq 2(1+2\nu )\tilde{F}(U)~~\mbox{ for all }U\in \mathbb{R}^{2}
\end{equation*}
for the general nonlinearity  $\tilde{F}(\nabla w)$.
With this new blow-up condition all the steps in the proof of Theorem \ref{theo5.2} are still valid for the general case
except for the  modification below  in the estimate for the term $\langle Rw_{tt},Rw\rangle $:
\begin{eqnarray*}
         \langle Rw_{tt}, Rw  \rangle &=&  \langle R^2w_{tt},w \rangle \\
                &=& -\int_{\Bbb R^2}  \nabla w \cdot \nabla \tilde{F}( \nabla w) dx dy \\
        &\geq &-2 (1+2 \nu) \int_{\Bbb R^2} \tilde{F}( \nabla w) dx dy\\
        &=& (1+2\nu) \left( \Vert Rw_t\Vert^2-2E(0) \right ).
    \end{eqnarray*}
We note that the new blow-up condition reduces to the blow-up condition of Theorem \ref{theo5.2} for the isotropic form
$F(|\nabla w|^{2})$.
\vspace*{20pt}

\noindent
{\bf Acknowledgement}: This work has been supported by the Scientific and Technological Research Council of Turkey
(TUBITAK) under the project TBAG-110R002. The authors are grateful to the anonymous referees for the insightful comments
and suggestions.

\section*{References}

 \end{document}